\numberwithin{equation}{section}
\theoremstyle{definition}
\newtheorem{Th}{Theorem}[section]
\newtheorem{Lemma}[Th]{Lemma}
\newtheorem{Prop}[Th]{Proposition}
\newtheorem{Rem}[Th]{Remark}
\newcommand{\Heis}{S}
\newcommand{\pHeis}{S_{1}}
\newcommand{\twomodforms}{\mathcal{M}_{2}}
\newcommand{\pmodforms}{\mathcal{M}_{p}}
\newcommand{\poverconv}[1]{\pmodforms^{\dagger} (#1)}
\newcommand{\twooverconv}[1]{\twomodforms^{\dagger} (#1)}
\newcommand{\haupt}{\lambda}
\newcommand{\upee}[1]{\left. #1 \right\rvert \hspace{1pt} U_{p}}
\newcommand{\utwo}[1]{\left. #1 \right\rvert \hspace{1pt} U_{2}}
\newcommand{\uthree}[1]{\left. #1 \right\rvert \hspace{1pt} U_{3}}
\DeclareMathOperator{\SL}{SL}
\begin{document}

\title[Image of the 2-adic Character Map via MLDEs]{An Improved Lower Bound on the Image of the 2-adic Character Map for the Heisenberg Algebra via Modular Linear Differential Equations}

\author{Daniel Barake and Cameron Franc}

\begin{abstract}
We describe families of MLDEs whose solutions are modular forms of level one that converge, $2$-adically, to a Hauptmodul on $\Gamma_0(2)$ by using a theorem of Serre. Then, we apply this to show that the image of the character map on the $2$-adic Heisenberg VOA $S_{1}$ contains the space of $2$-adic overconvergent modular forms $M_{2}^{\dagger}(1/2)$ of weight zero.
\end{abstract}

\maketitle

\section{Introduction}

The study of $p$-adic vertex operator algebras was initiated by Franc-Mason \cite{FM23} in part as a means to further incorporate the $p$-adic study of modular forms into VOA theory. In particular, the normalized character map (i.e. 1-point correlation function or graded trace) on these new structures was shown to have image inside Serre's space of $p$-adic modular forms \cite{pSerre}. In the algebraic rather than the $p$-adic case, it is known that the normalized character map $F_{\Heis}$ on the Heisenberg VOA $\Heis$ gives a surjection of graded linear spaces $\Heis \to M'$ where $M'$ denotes the space of quasi-modular forms --- see \cite{MT} for a proof. Whether or not the $p$-adic character map surjects onto the ring of $p$-adic modular forms is currently an open question. 

For the $p$-adic completion of $\Heis$, which we denote as $\pHeis$, it was shown in \cite{BF24} that the image of the character map contains families of $p$-adic modular forms, including every $p$-adic Eisenstein series $G_{k}^{\star}(q)$. In fact, a similar result was proven for a $p$-adic completion of a lattice vertex operator algebra, though in this manuscript we focus only on $\pHeis$. Then in \cite{FM23a}, the authors used the description of the space of $2$-adic modular forms as the Tate ring $\twomodforms = \mathbb{Q}_{2}\langle j^{-1} \rangle$ in the variable $j^{-1}$, where $j$ is the classical $j$-invariant, to prove that the space $\twooverconv{7/4}$ of overconvergent modular forms of weight zero lies in the image of the character map on $\pHeis$. See Section \ref{Subsection: padicforms} for a brief introduction to $p$-adic and overconvergent modular forms, and in particular for a definition of the space $M^\dagger_2(r)$. In Remark 6.8 of \cite{FM23a}, it was proposed that instead of working with the description $\twomodforms = \mathbb{Q}_{2}\langle j^{-1} \rangle$, the alternative description $\twomodforms = \mathbb{Q}_{2}\langle \haupt \rangle$, where $\haupt$ is a genus-zero Hauptmodul on $\Gamma_{0}(2)$, might yield better $p$-adic properties. This would be in analogy with similar computations in number theory, cf. \cite{BC05,BKK05}. Indeed, our first main theorem is the following:
\begin{Th} \label{Th: TheoremImage}
    The space $\twooverconv{1/2}$ of $2$-adic overconvergent modular forms of level one and weight zero lies in the image of the character map on $\pHeis$.
\end{Th}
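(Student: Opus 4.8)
The plan is to work throughout with the presentation $\twomodforms = \mathbb{Q}_{2}\langle \haupt \rangle$ suggested in Remark 6.8 of \cite{FM23a}, and to show that the image of $F_{\pHeis}$ contains a subset of $\twooverconv{1/2}$ that is dense in the overconvergent Banach topology. Writing a weight-zero overconvergent form as a series $\sum_{n \geq 0} a_{n}\haupt^{n}$ whose coefficients satisfy the growth condition cutting out $\twooverconv{1/2}$, I would first reduce the theorem to exhibiting, for each $n$, an appropriate $2$-power rescaling of the monomial $\haupt^{n}$ inside the image of $F_{\pHeis}$, and then pass to arbitrary admissible series using continuity of $F_{\pHeis}$ together with the $2$-adic completeness of $\pHeis$.

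The heart of the construction is to produce these generators as $2$-adic limits of level-one modular forms via modular linear differential equations. For a fixed order and a free weight parameter $k$, the associated MLDE in the Ramanujan--Serre derivative has a distinguished solution $f_{k}$ that is a level-one modular form of weight $k$. Since $F_{\Heis}$ surjects onto the quasi-modular forms, every such $f_{k}$ (and every level-one form entering the argument) already lies in the image of the classical character map, hence in that of $F_{\pHeis}$. I would then engineer the family so that, along a sequence of weights $k_{i} \to 0$ in the $2$-adic weight space of \cite{pSerre}, the normalized solutions $f_{k_{i}}$ converge coefficientwise $2$-adically to the Hauptmodul $\haupt$, with analogous families targeting the higher monomials $\haupt^{n}$.

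Serre's theorem then does the conceptual work: because each $f_{k_{i}}$ is a genuine level-one modular form and the $f_{k_{i}}$ converge $2$-adically, the weights $k_{i}$ converge in weight space and the limit is a $2$-adic modular form of weight $0$, which we identify with the target monomial $\haupt^{n}$. The decisive quantitative gain over \cite{FM23a} comes from the Hauptmodul presentation: since $\haupt$ is a Hauptmodul on $\Gamma_{0}(2)$, the $2$-adic valuations of the coefficients of the approximants $f_{k_{i}}$, and the valuations measuring how fast $f_{k_{i}} \to \haupt^{n}$, are far larger than in the $j^{-1}$-expansion of \cite{FM23a}; it is precisely this surplus of valuation that pushes the attainable overconvergent radius down from $7/4$ to $1/2$.

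The main obstacle I expect is this last quantitative step. What must be proven is a sharp lower bound, uniform in $n$, on the $2$-adic valuations governing the coefficients of the MLDE solutions $f_{k_{i}}$ and the rate of convergence $f_{k_{i}} \to \haupt^{n}$, strong enough to meet the growth condition defining $\twooverconv{1/2}$. This forces a careful study of the recursion satisfied by the $q$-expansion coefficients of the MLDE solutions, combined with the $2$-adic arithmetic of $\haupt$ and its relation to $j$; by contrast, the closedness and density reduction, the lifting of the $f_{k_{i}}$ to convergent states in $\pHeis$, and the weight bookkeeping in Serre's theorem should be comparatively routine, following the template of \cite{FM23a}.
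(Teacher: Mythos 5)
Your overall architecture matches the paper's: work in the presentation $\twomodforms = \mathbb{Q}_{2}\langle\haupt\rangle$, reduce to realizing the monomials $\haupt^{n}$ as characters via level-one approximants tied to MLDEs, and finish by linearity and completeness. But there are two genuine gaps. First, you leave the construction of the approximating family unspecified: you propose to ``engineer'' a weight-parametrized family of MLDEs whose normalized solutions converge $2$-adically to $\haupt^{n}$, but give no mechanism for arranging this. The paper runs the logic in the other direction: it first defines $\haupt_{n,m}$ of weight $12\cdot 2^{m}$ by Serre's explicit trace over $\Gamma_{0}(2)/\SL_{2}(\mathbb{Z})$, so that convergence to $\haupt^{n}$ is guaranteed by Serre's Th\'eor\`eme 10, and only then proves --- this is the content of Theorem \ref{Th: TheoremMLDE} --- that these particular forms satisfy a fixed third-order hypergeometric MLDE. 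The MLDE is a computational device for writing $\haupt_{n,m}$ explicitly in the basis $\mathbb{Z}[E_{4},E_{6}]$, not the source of the approximants; without the trace construction your families have no reason to limit to $\haupt^{n}$, and Serre's theorem in the direction you invoke it (convergent forms have convergent weights and a $p$-adic limit) does not identify that limit.

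Second, and more seriously, you locate the decisive quantitative estimate in the wrong place. You expect the radius $1/2$ to come from lower bounds on the $2$-adic valuations of the $q$-expansion coefficients of the approximants and on the rate of convergence $f_{k_{i}}\to\haupt^{n}$, and you call the lifting to $\pHeis$ ``comparatively routine''. In fact the forms $\haupt_{n,m}$ are essentially integral and their convergence rate plays no role in fixing the overconvergence radius; what matters is the sup-norm valuation of the \emph{pre-image states} in $S_{1}$. The paper chooses lifts $v_{n,m}=\sum_{i}c_{n,m,i}\,\alpha_{3\cdot 2^{m}-3i}\beta_{2i}$ built from the states of (\ref{alphabetastates}), computes $\nu_{2}(\alpha_{r}\beta_{s})=-4r-3s$ by converting square brackets to round brackets (convergence in $S_{1}$ must be measured in the round-bracket basis, a point your sketch never touches), and proves $\nu_{2}(v_{n,m})\geq -6n$ (Proposition \ref{Prop: overconvergent}). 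It is exactly this bound, matched against the condition $\vert a_{n}\vert_{2}\cdot 2^{6n}\to 0$ defining $\twooverconv{1/2}=\mathbb{Q}_{2}\langle 2^{6}\haupt\rangle$, that yields the radius $1/2$. Since the character map has large kernel, both the choice of lift and the control of its valuation are where the theorem is actually won; an argument confined to the modular-forms side cannot reach the stated radius.
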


Since there are containments $\twooverconv{r} \subseteq \twooverconv{s} \subseteq \mathcal{M}_2$ when $s \leq r$, Theorem \ref{Th: TheoremImage} represents an improvement over the analogous result of \cite{FM23a}.

Though our focus will be when $p=2$, the hauptmodul $\haupt \in M_{0} \left( \Gamma_{0}(p) \right)$ over any prime $p \in \lbrace 2,3,5,7,13 \rbrace$ is given by the expression
\begin{align*}
    \haupt = \left( \frac{\eta (q^{p})}{\eta (q)} \right)^{\frac{24}{p-1}}.
\end{align*}
This is a meromorphic function on the compact modular curve $X_{0}(p)$ possessing a simple zero at the cusp and a pole at zero. Here, $\eta (q)$ is the Dedekind-eta function of weight $1/2$ satisfying $\eta (q)^{24} = \Delta (q)$ where
\begin{align*}
    \Delta (q) = \frac{E_{4}(q)^{3} - E_{6}(q)^{2}}{1728}
\end{align*}
is the modular discriminant of weight 12, and
\begin{align*}
    E_{k}(q) = 1 - \frac{2k}{B_{k}} \sum_{n \geq 1} \sigma_{k-1}(n) q^{n}
\end{align*}
denotes the normalized Eisenstein series of level one of weight $k$ on $\SL_{2}(\mathbb{Z})$. 

Much of the proof of Theorem \ref{Th: TheoremImage} is dedicated to showing that the powers $\haupt^{n}$ can be realized as the characters of specific states in $\pHeis$. Hence, if we ignore the convergence conditions underlying the definition of the overconvergent spaces, then Theorem \ref{Th: TheoremImage} immediately implies the following simpler statement:
\begin{Th} \label{Th: TheoremImage2}
    The space $\mathbb{Q}_{2}[ \haupt ]$ lies in the image of the character map on $\pHeis$ where $\haupt = \Delta (q^{2}) / \Delta (q)$.
\end{Th}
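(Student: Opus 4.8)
The plan is to combine three ingredients: the Mason--Tuite surjectivity \cite{MT} of the algebraic character map $F_{\Heis}\colon \Heis \to M'$ onto quasi-modular forms of level one, the completeness of $\pHeis$ together with the continuity of its character map $F_{\pHeis}$ \cite{FM23,BF24}, and Serre's description of $\twomodforms$ as the $2$-adic completion of the space of classical level-one modular forms \cite{pSerre}. Because $F_{\pHeis}$ is linear and $2$-adically continuous, it is enough to realize each power $\haupt^{n}$ as the character of a single state of $\pHeis$; the ring $\mathbb{Q}_{2}[\haupt]$ then follows by forming finite $\mathbb{Q}_{2}$-linear combinations. This matches the remark that the bulk of the work lies in exhibiting the $\haupt^{n}$ as characters of specific states.

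First I would record the congruence that makes the Hauptmodul accessible from level one. Writing $\haupt = \Delta(q^{2})/\Delta(q)$ and using $(1-q^{2n}) \equiv (1-q^{n})^{2} \pmod 2$, one finds $\Delta(q^{2}) \equiv \Delta(q)^{2} \pmod 2$, whence $\haupt \equiv \Delta \pmod 2$ as integral $q$-series; this exhibits $\haupt$ as a first-order $2$-adic approximation by a level-one form. Since $\twomodforms = \mathbb{Q}_{2}\langle \haupt \rangle$, every power $\haupt^{n}$ lies in the Tate algebra $\twomodforms$, which by Serre's theorem is the $2$-adic closure of the classical level-one modular forms. Hence each $\haupt^{n}$ is a $2$-adic limit of a sequence $f_{n,k}$ of level-one modular forms, and I would produce these $f_{n,k}$ explicitly as solutions of a family of modular linear differential equations; this is the role of the MLDE machinery.

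Next I would realize the approximants inside the image. By Mason--Tuite each $f_{n,k}$, being quasi-modular of level one, equals $F_{\Heis}(v_{n,k})$ for some state $v_{n,k} \in \Heis \subseteq \pHeis$. If the states $v_{n,k}$ can be arranged into a $2$-adically Cauchy sequence, then by completeness they converge to a state $v_{n} \in \pHeis$, and continuity of the character map yields $F_{\pHeis}(v_{n}) = \lim_{k} f_{n,k} = \haupt^{n}$, as desired. The differential-equation presentation of the $f_{n,k}$ is what I would use to pin down their $q$-expansions, weights, and hence natural preimage states uniformly in $k$, so that this convergence can actually be verified rather than merely hoped for.

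The principal obstacle is exactly this convergence of preimages. Surjectivity of $F_{\Heis}$ guarantees a state lying over each $f_{n,k}$ but controls neither its $2$-adic valuation nor its behaviour as $k$ grows, and the weights of the $f_{n,k}$ must drift $2$-adically toward the weight $0$ of $\haupt^{n}$, so two forms that are $2$-adically close need not admit close preimages; a naive appeal to Serre's approximation is therefore insufficient. The substance of the argument is to choose the approximating forms so that the associated states are genuinely Cauchy --- for instance by relating $\haupt^{n}$ to level-one data through the operators $U_{2}$ and $V_{2}$ and the Hecke action on $\Delta$ (using $V_{2}\Delta = \Delta(q^{2}) = \haupt\,\Delta$ and $T_{2} = U_{2} + 2^{11}V_{2}$ in weight $12$), which should furnish level-one approximants carrying a transparent and uniformly bounded lift to $\pHeis$.
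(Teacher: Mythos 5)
Your outline follows the same architecture as the paper's argument --- approximate $\haupt^{n}$ by level-one forms, lift them to $\Heis$ via Mason--Tuite, and pass to the limit in $\pHeis$ --- and you correctly identify the crux: surjectivity of $F_{\Heis}$ gives no control over the $2$-adic size of a pre-image, so the Cauchy property of the lifted states is the whole battle. But your proposal stops exactly there. Saying that a suitable choice of approximants ``should furnish level-one approximants carrying a transparent and uniformly bounded lift'' is the statement of the problem, not its solution. Two concrete ingredients are missing. First, a canonical choice of pre-image: since $F_{\Heis}$ has a large kernel, ``each $f_{n,k}$ equals $F_{\Heis}(v_{n,k})$ for some state'' does not produce a well-defined sequence, let alone a Cauchy one. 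The paper resolves this by restricting to the subspace $\Sigma$ spanned by the states $\alpha_{r}\beta_{s}$ of (\ref{alphabetastates}), on which $F_{\Heis}$ is an isomorphism onto $M$ sending $\alpha_{r}\beta_{s}\mapsto G_{4}^{r}G_{6}^{s}$ (Proposition \ref{Prop: charactermultiplicative}); the lift of a form is then determined by its coordinates in the Eisenstein basis. Second, the actual valuation estimates: one needs explicit Eisenstein-basis coefficients for the approximants (obtained in the paper by solving the ${}_{3}F_{2}$-type MLDE of Proposition \ref{Prop: hmdiffs}, yielding Proposition \ref{Prop: hmnseisenstein} and the coefficients $c_{n,m,i}$ of (\ref{hmcoeffs})), combined with the valuation $\nu_{2}(\alpha_{r}\beta_{s})=-4r-3s$ of Lemma \ref{Lemma: alphabetaval}, to show the differences $v_{n,m+1}-v_{n,m}$ tend to zero. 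Without these, nothing prevents the valuations of your lifts from drifting to $-\infty$ as the weights $12\cdot 2^{m}$ grow.

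A further point your proposal does not address is that convergence in $S_{1}$ must be measured in the round-bracket variables $h(-n)$, whereas the Mason--Tuite formula and hence the natural pre-images live in the square-bracket formalism; the change of basis (\ref{squareh2exp})--(\ref{squareh3exp}) introduces denominators such as $240$ whose $2$-adic valuations must be tracked (this is precisely what Lemmas \ref{Lemma: alphabetaval} and \ref{Lemma: alphaformula} do). Your congruence $\haupt\equiv\Delta\pmod 2$ is correct but is only a first-order approximation and plays no role in either closing the gap or in the paper's argument; similarly, the Hecke identity $T_{2}=U_{2}+2^{11}V_{2}$ is a plausible starting point but you would still have to extract from it both explicit level-one approximants in the basis $\mathbb{Z}[E_{4},E_{6}]$ and uniform bounds on the resulting states, which is the content the paper supplies via Serre's trace construction (\ref{hmraweq}) and the MLDE machinery.
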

Now in all of \cite{FM22,BF24,FM23a}, the character on $\pHeis$ is defined via $p$-adic completion, and it is in particular the linear map $\widehat{F}_{\Heis}$ making the following diagram commute:
\[
\begin{tikzcd}[row sep = 4em, column sep = 6em]
    \pHeis \arrow[r,"\widehat{F}_{\Heis}"] & \twomodforms \\ \Heis \arrow[u,hook] \arrow[r,"F_{\Heis}"] & \mathbb{Q}_{2}[E_{2},E_{4},E_{6}] \arrow[u,hook]
\end{tikzcd}
\]
Hence, we summarize the process of showing that specific $p$-adic modular forms arise as characters here using this diagram: First we find a sequence of modular forms of level one $(\haupt_{n,m}) \in M$ whose $p$-adic limit is $\haupt^{n}$ for each $n \geq 1$. Then we find a sequence $(v_{n,m}) \in \Heis$ of states satisfying that $F_{\Heis}(v_{n,m},q) = \haupt_{n,m}$. There is some choice for these pre-images, but we show that a rather natural choice produces the sequence $(v_{n,m})$ that converges $2$-adically in $S_1$ with respect to the sup-norm. Finally, we assess the $2$-adic properties of these $v_{n,m}$ along with the description of $\twooverconv{r}$ to show that indeed $\twooverconv{1/2}$ is contained in the image of the $p$-adic character map.

We say a bit more about the first step, realizing $\lambda^n$ as a sequence of modular functions of level one: In his 1973 paper \cite{pSerre}, Serre proved that modular forms $f$ on $\Gamma_{0}(p)$ are $p$-adic by constructing an explicit sequence $(f_{m})$ of forms of level one converging $p$-adically to $f$; these are obtained by ``tracing'' over representatives of $\Gamma_{0}(p) / \text{SL}_{2}(\mathbb{Z})$. We apply this method directly to $\haupt^{n}$ in order to obtain our sequence $(\haupt_{n,m})$ with $\lim_{m \to \infty} \haupt_{n,m} = \haupt^{n}$. 

The next step is to pass to the Heisenberg $\Heis$ using eq. (\ref{MTeq}) of Mason-Tuite in \cite{MT}, that is, we find pre-images of the modular forms $\haupt_{n,m}$ discussed above. To use the Mason-Tuite formula, it is necessary to describe the modular forms $\haupt_{n,m}$ in terms of the Eisenstein basis of $M$. It is not immediately obvious how to obtain such expressions from Serre's trace procedure that underlies the definition of $\haupt_{n,m}$. We resolve this issue via the following striking fact which serves as our second main theorem:
\begin{Th} \label{Th: TheoremMLDE}
    When $p=2$, the Serre sequence $(\haupt_{n,m})$ satisfying that $\haupt^{n} = \lim_{m \to \infty} \haupt_{n,m}$ for all $n \geq 1$ satisfies a third-degree modular linear differential equation for all $m \geq 1$.
\end{Th}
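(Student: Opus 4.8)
The plan is to read off the order of the equation directly from the index $[\SL_2(\mathbb{Z}):\Gamma_0(2)] = 3$. The starting point is the clean first-order relation
\[
D\haupt = \phi\,\haupt, \qquad \phi = 2E_2(q^2) - E_2(q) \in M_2(\Gamma_0(2)),
\]
where $D = q\,d/dq$; this follows immediately from $D\log\Delta = E_2$ together with $\haupt = \Delta(q^2)/\Delta(q)$, after checking that $\phi = D\log\haupt$ is holomorphic of weight two at both cusps of $\Gamma_0(2)$. Consequently each power satisfies $D(\haupt^n) = n\phi\,\haupt^n$, so the logarithmic derivative of $\haupt^n$ is the weight-two $\Gamma_0(2)$-form $n\phi$. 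The three coset-translates $\phi^{(1)},\phi^{(2)},\phi^{(3)}$ of $\phi$ have elementary symmetric functions that are $\SL_2(\mathbb{Z})$-invariant of weights $2$, $4$ and $6$; the weight-two one, namely the trace $\phi^{(1)}+\phi^{(2)}+\phi^{(3)}$, is a holomorphic level-one form and hence vanishes since $M_2(\SL_2(\mathbb{Z})) = 0$, while the remaining two lie in $M_4$ and $M_6$. I expect this vanishing to be exactly what pins the order at three rather than forcing a larger system.

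The key step is then a general principle I would isolate as a lemma. Recall that $\haupt_{n,m}$ arises as Serre's trace to level one of a weight-raised form $g_{n,m} = \haupt^{n}\cdot\psi_m$, where $\psi_m$ is a form \emph{on $\Gamma_0(2)$} congruent to $1$ modulo $2^m$, so that $g_{n,m}\equiv\haupt^n$ and, its weight being large, the trace still converges $2$-adically to $\haupt^n$; note that the weight-raiser must be genuinely of level $2$, since a level-one factor would pull out of the trace and leave the symmetric trace of $\haupt^n$ rather than $\haupt^n$ itself. Write $g_{n,m}^{(1)},g_{n,m}^{(2)},g_{n,m}^{(3)}$ for the three coset-translates of $g_{n,m}$ under $\Gamma_0(2)\backslash\SL_2(\mathbb{Z})$. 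These are meromorphic modular forms of one common weight $w$ that are permuted, via the isomorphism $\SL_2(\mathbb{Z}/2)\cong S_3$, by the weight-$w$ action of $\SL_2(\mathbb{Z})$; hence they span an $\SL_2(\mathbb{Z})$-stable space of dimension at most three. I would then form the monic third-order operator annihilating this space by the usual Wronskian recipe: its coefficients are ratios built from $\mathrm{Wr}(g^{(1)},g^{(2)},g^{(3)})$ and its $D$-derivatives, which are symmetric in the three translates and of definite weight, so the $E_2$-anomalies cancel inside the determinant and one is left with honest level-one modular forms of weights $2$, $4$, $6$. After recasting in terms of the Serre derivative this is precisely a third-degree MLDE, and by construction $\haupt_{n,m} = \sum_i g_{n,m}^{(i)}$ lies in its solution space and so is annihilated.

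The main obstacle will be confirming that this Wronskian operator is a genuine third-degree MLDE with the asserted coefficients, which I see as two points. First, that the three translates are linearly independent, so that the operator truly has order three and the Wronskian in the denominator does not vanish identically; here the explicit relation $D\haupt = \phi\,\haupt$ lets one compute the leading $q$-expansions of the translates and of their Wronskian directly, and also writes the MLDE coefficients through the symmetric functions of $\phi$ and the weight of $\psi_m$. Second, that the coefficient forms are not merely meromorphic functions of $j$ but lie in $M_4$ and $M_6$ with controlled poles, which amounts to tracking the divisor of $\mathrm{Wr}(g^{(1)},g^{(2)},g^{(3)})$ on $X_0(2)$; the vanishing of the weight-two symmetric function noted above is what keeps the leading modular coefficient trivial and the order equal to three. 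Once these are in hand the conclusion holds uniformly in $m$, the degree being three precisely because $[\SL_2(\mathbb{Z}):\Gamma_0(2)] = 3$.
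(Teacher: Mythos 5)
Your route is genuinely different from the paper's. The paper never mentions cosets or Wronskians: it splits $\haupt_{n,m}$ into the explicit sum $T_{1}+2^{1-12n}(240)^{3\cdot 2^{m}}T_{2}$ with $T_{1}=\haupt^{n}\bigl(5E_{4}(q)-20E_{2}^{\star}(q)^{2}\bigr)^{3\cdot 2^{m}}$ and $T_{2}=\Delta(q)^{2^{m+1}-n}\,\bigl(\Delta(q)^{n-2^{m}}\mid U_{2}\bigr)$, and then verifies by direct computation in $M(\Gamma_{0}(2))=\mathbb{C}[E_{2}^{\star},E_{4}]$, using $\mathcal{D}(E_{2}^{\star})=\tfrac13 (E_{2}^{\star})^{2}-\tfrac16 E_{4}$ and the commutation of $U_{2}$ with $\theta$, that $T_{1}$ and $T_{2}$ each satisfy the same explicit MLDE. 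Your coset picture, if completed, would have the virtue of explaining conceptually why the degree is $3=[\SL_2(\mathbb{Z}):\Gamma_0(2)]$ uniformly in $n$ and $m$ --- the phenomenon the paper calls striking but does not explain --- and your first-order relation $D\haupt^{n}=n\phi\,\haupt^{n}$ with $\phi=-E_{2}^{\star}$ is exactly the paper's eq.\ (3.8).

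There is, however, a genuine gap at the step you yourself flag as the main obstacle, and it is not a routine verification. The Wronskian recipe applied to the three translates $g^{(1)},g^{(2)},g^{(3)}$ produces a monic third-order operator whose coefficients are level-one invariant of the correct weights, but a priori only \emph{meromorphic}: they are ratios $W_{i}/W$ of minors by the full Wronskian, and they lie in $M_{4}$ and $M_{6}$ only if $W$ divides each $W_{i}$ in the graded ring $M$ --- in practice, only if $W$ is a nonzero constant times a power of $\Delta$, equivalently $\mathcal{D}(W)=0$, which is also precisely what makes the $\mathcal{D}^{2}$-coefficient vanish. For a generic weight-$w$ form on $\Gamma_{0}(2)$ this fails: the Wronskian of its translates is a level-one form of weight $3w+6$ with many zeros in the fundamental domain, so the generic such form satisfies only a third-order equation with meromorphic coefficients, not a third-degree MLDE. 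The content of the theorem is exactly that for these particular weight-raised forms the Wronskian is zero-free on $\mathbb{H}$, and nothing in your argument establishes this. In particular, the vanishing of $\phi^{(1)}+\phi^{(2)}+\phi^{(3)}$ does not do the job: that identity controls the cubic satisfied by the logarithmic derivatives of the translates of $\haupt$ alone, whereas the Wronskian you need is that of the translates of $\haupt^{n}\psi_{m}$, which are not determined by the $\phi^{(i)}$. Closing the gap means computing the divisor of $\mathrm{Wr}(g^{(1)},g^{(2)},g^{(3)})$ for every $n,m$ (or, equivalently, producing the other two solutions and checking independence and the exponents, as the paper does a posteriori via the ${}_{3}F_{2}$ identification), and at that point the work is comparable to the paper's direct verification.
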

The exact equation is of generalized hypergeometric $_{3} F_{2}$-type, and it is given in the statement of Proposition \ref{Prop: hmdiffs} below. Of course, it is known that all modular forms satisfy differential equations, but the degree of these equations generally increases with the weight of the form. What we find striking in this result is that, by construction, the weights of the forms $\haupt_{n,m}$ in general grow with $n$ and $m$, yet the degree of the corresponding MLDE is always three.

With Theorem \ref{Th: TheoremMLDE} in hand, and knowing that the corresponding differential equation is generalized hypergeometric, we solve the MLDE using methods found in \cite{FM16,KT01}, and this solution allows us to write our modular forms in the Eisenstein basis $\haupt_{n,m} \in \mathbb{Z}[E_{4},E_{6}]$ for all $n,m \geq 1$. With such explicit descriptions in hand, we can then use the formula of Mason-Tuite to lift $\haupt_{n,m}$ to the Heisenberg algebra. We then conclude the proof of Theorem \ref{Th: TheoremImage} by studying the $p$-adic properties of these explicit formulas for the pre-images.

When $p \in \lbrace 3,5,7,13 \rbrace$, we find analogous sequence terms $\haupt_{n,m} \in M$ which converge to their respective hauptmoduln on $\Gamma_{0}(p)$ respectively, however, we are currently unable to find an MLDE such as the one referred to in Theorem \ref{Th: TheoremMLDE} which is satisfied by these $\haupt_{n,m}$ for the odd genus zero primes. At the very least, computational evidence using SAGE suggests that there is no low-degree (monic or non-monic) MLDE which has $\haupt_{n,m}$ as a solution for these primes. Besides providing general formulas for the $\haupt_{n,m}$ in Subsection \ref{Subsection: Serre's Sequence} below that work for all genus zero primes, we leave the search for analogous MLDEs --- or an alternative method for studying the image of the $p$-adic character map --- in these cases to the interested reader.

The paper is divided as follows: In $\S$\ref{Section: Background}, we review the necessary basic facts about both modular forms and the Heisenberg vertex operator algebra, as well as their $p$-adic counterparts. Then in $\S$\ref{Section: MLDES} we establish Theorem \ref{Th: TheoremMLDE} and we find explicit expressions for each sequence term $\haupt_{n,m}$. Finally $\S$\ref{Section: VOAS} is dedicated to finding the sequence of pre-images $(v_{n,m}) \in \Heis$, establishing their $p$-adic properties, and proving Theorems \ref{Th: TheoremImage} and \ref{Th: TheoremImage2}.

The authors would also like to thank the referee for their helpful comments on a prior version of this manuscript.

\section{Background and Notation} \label{Section: Background}

\subsection{Notation} We assume the following notational choices and conventions throughout:
\begin{itemize}
    \item $\mathbb{N} = \lbrace 0,1,2, \ldots \rbrace$ is the set of non-negative integers.
    \item $M = \mathbb{C}[E_{4},E_{6}]$ is the graded algebra of modular forms of level one. 
    \item $M' = \mathbb{C}[E_{2},E_{4},E_{6}]$ is the graded algebra of quasi-modular forms of level one.
    \item $M(\Gamma) = \bigoplus_{k \geq 0} M_{k}(\Gamma)$ is the graded algebra of modular forms on the congruence subgroup $\Gamma$. 
    \item $\pmodforms$ is the space of $p$-adic modular forms of weight zero. 
    \item $\poverconv{r}$ is the space of $p$-adic $r$-overconvergent modular forms of weight zero.
    \item $B_{k}$ is the $k^{\text{th}}$ Bernoulli number. 
    \item $\tau$ is the complex variable in the Poincar\'e upper-half plane.
    \item $q = e^{2 \pi i \tau}$ is the nome.
    \item $G_{k}(q) = -\frac{B_{k}}{2k}E_{k}(q) = - \frac{B_{k}}{2k} + \sum_{n \geq 1} \sigma_{k-1}(n)q^{n}$ is the weight $k$ Eisenstein series. 
    \item $E_{k}(q) = - \frac{2k}{B_{k}}G_{k}(q)$ is the normalized weight $k$ Eisenstein series with constant coefficient 1.
    \item $\Delta (q) = \frac{1}{1728} ( E_{4}(q)^{3} - E_{6}(q)^{2})$ is the modular discriminant of weight $12$.
    \item $\eta (q) = q^{1/24} \prod_{i \geq 1} (1-q^{n})$ is the Dedekind $\eta$-function of weight $1/2$.
    \item $j = E_{4}(q)^{3} / \Delta (q)$ is the modular $j$-invariant.
    \item $E_{k}^{\star}(q) = E_{k}(q) - p^{k-1} E_{k}(q^{p})$ is the $p$-adic weight $k$ Eisenstein series.
    \item $\Heis$ is the (algebraic) Heisenberg VOA of central charge $c_{V} = 1$.
    \item $\pHeis$ is the $2$-adic Heisenberg VOA.
\end{itemize}

\subsection{Modular Differential Operators}
Recall that the Ramanujan-Serre derivative $\mathcal{D}_{r}: M_{r}(\Gamma) \to M_{r+2}(\Gamma)$ acts on modular forms $f \in M_{r}(\Gamma)$ by the rule
\begin{align}
    \mathcal{D}_{r} (f) = \theta (f) - \frac{r}{12} E_{2}(q) f = \theta(f) + 2r G_{2}(q)f \label{modularderdef}
\end{align}
where $\theta = q \frac{d}{dq}$. Define also the iterates
\begin{align*}
    \mathcal{D}_{r}^{n} = \mathcal{D}_{r+2(n-1)} \circ \mathcal{D}_{r+2(n-2)} \circ \cdots \circ \mathcal{D}_{r},
\end{align*}
keeping in mind that $\mathcal{D}_{r}^{2} \neq \mathcal{D}_{r} \circ \mathcal{D}_{r}$. The modular derivative satisfies the Leibniz rule in the following way: For $fg \in M_{r}(\Gamma)$, we have $\mathcal{D}_{r}(fg) = \mathcal{D}_{a}(f) \cdot g + f \cdot \mathcal{D}_{b}(g)$ where $r=a+b$. Hence we will often omit writing the subscript in the notation for the modular derivative when the weight is clear. The operator $\theta$ will also play an important role, and its interaction with Eisenstein series is given by the well-known Ramanujan identities
\begin{align}
    \theta (E_{2}(q)) &= \frac{1}{12} \left( E_{2}(q)^{2} - E_{4}(q) \right) \label{ram1} \\
    \theta (E_{4}(q)) &= \frac{1}{3} \left( E_{2}(q)E_{4}(q) - E_{6}(q) \right) \label{ram2} \\
    \theta (E_{6}(q)) &= \frac{1}{2} \left( E_{2}(q)E_{6}(q) - E_{4}(q)^{2} \right). \label{ram3}
\end{align} 
A monic degree-$t$ modular linear differential equation (MLDE) over $M'$ is then an expression of the form
\begin{align*}
    c_{t} \mathcal{D}^{t}(f(q)) + c_{t-1} \mathcal{D}^{t-1}(f(q)) g_{t-1}(q) + \cdots + c_{0} g_{0}(q) = 0
\end{align*}
where $g_{i}(q) \in M'_{2t-2i}$ for some coefficients $c_{i} \in \mathbb{C}$, and where $f(q)$ is called a solution of the MLDE. As a modular linear differential operator or ``MLDO'', it takes modular forms of weight $n$ to those of weight $n+2t$. \par

\subsection{\textit{p}-adic modular forms} \label{Subsection: padicforms}

Our description of $p$-adic modular forms follows the classical one of Serre in \cite{pSerre}. That is, a $p$-adic modular form is a formal power series
\begin{align}
    f = \sum_{i=0}^{\infty} a_{i}q^{i} \in \mathbb{Q}_{p}[[q]] \label{padicformex}
\end{align}
where $f$ is the limit of a sequence of modular forms $f_{m} \in M$ having weight $k_{m}$. Note the limit here is taken over the sup-norm; for $g = \sum_{j=0}^{\infty} c_{j}q^{j} \in \mathbb{Q}[[q]]$, this is
\begin{align*}
    \nu_{p}(g) = \inf_{j} \nu_{p}(c_{j}).
\end{align*}
The space $\pmodforms$ is then defined as the completion of $\mathbb{Q}_{p}[E_{4},E_{6}]$ with respect to the sup-norm above. We note also that the weight of $f$ in (\ref{padicformex}) is an element of the space
\begin{align*}
    X = \varprojlim_{m} \mathbb{Z}/p^{m}(p-1)\mathbb{Z} \cong \mathbb{Z}_{p} \times \mathbb{Z} / (p-1)\mathbb{Z}
\end{align*}
where $X \cong \mathbb{Z}_{2}$ when $p=2$. Important examples of $p$-adic modular forms are the $p$-adic Eisenstein series, which we will be repeatedly expressing in the form
\begin{align*}
    E_{k}^{\star}(q) = E_{k}(q) - p^{k-1}E_{k}(q^{p}), \hspace{10pt} k \geq 2.
\end{align*}
Again our focus will be when $p=2$, and we showcase some important identities in this case. We have $M(\Gamma_{0}(2)) = \mathbb{C}[E_{2}^{\star}, E_{4}]$ (see pg. 3 of \cite{Gott19}), so the following are easy to check:
\begin{align}
    E_{4}^{\star}(q) &= -10 E_{2}^{\star}(q)^{2} + 3 E_{4}(q) \label{e4starexp} \\
    E_{6}^{\star}(q) &= 40 E_{2}^{\star}(q)^{3} - 9 E_{2}^{\star}(q) E_{4}(q) \label{e6starexp}
\end{align}
Along with the Ramanujan identities (\ref{ram1})-(\ref{ram3}) we may compute their modular derivatives:
\begin{align}
    \mathcal{D} \left( E_{2}^{\star}(q) \right) &= \frac{1}{3} E_{2}^{\star}(q)^{2} - \frac{1}{6} E_{4}(q) \label{e2starder} \\
    \mathcal{D} \left( E_{4}^{\star}(q) \right) &= -\frac{8}{3} E_{2}^{\star}(q)^{3} + \frac{1}{3} E_{2}^{\star}(q) E_{4}(q). \label{e4starder}
\end{align}
These will be used later in Subsection \ref{Subsection: Sequences of MLDES} when deriving the MLDE in Theorem \ref{Th: TheoremMLDE}.

Theorem \ref{Th: TheoremImage} is concerned with the subspaces $\poverconv{r} \subset \pmodforms$ of $r$-overconvergent forms. Here we outline relevant facts from Section 3 of \cite{Vonk} as well as \cite{BC05}, in particular when $p=2$. The space of $2$-adic modular forms has the description 
\begin{align*}
    \twomodforms \cong \mathbb{Q}_{2}\langle j^{-1} \rangle = \Set{ a_{0} + a_{1}j^{-1} + a_{2}j^{-2} + \cdots | \vert a_{n} \vert_{2} \to 0}
\end{align*}
where $\vert \cdot \vert_{2}$ denotes the $2$-adic absolute value. For $r \geq 0$, the spaces $\twooverconv{r}$ are Banach spaces inside $\twomodforms$ which have additional growth conditions on the coefficients, hence the name ``overconvergent''. Alternatively, and motivated by number theoretical methods (cf. \cite{BC05,BKK05}), we have the description $\twomodforms \cong \mathbb{Q}_{2} \langle \haupt \rangle$ where $\haupt = \Delta (q^{2}) / \Delta (q)$, and the space of $r$-overconvergent forms becomes
\begin{align}
    \mathcal{M}_{2}^{\dagger}(r) = \mathbb{Q}_{2}\langle 2^{12r} \lambda \rangle = \Set{ a_{0} + a_{1}\haupt + a_{2}\haupt^{2} + \cdots | \vert a_{n} \vert_{2} \cdot 2^{12nr} \to 0 }.
\end{align}
The functions $j^{-1}$ and $\haupt$ are related via the expression
\begin{align*}
    j^{-1} = \frac{\haupt}{(1+2^{8}\haupt)^{3}},
\end{align*}
and we note that $\twooverconv{r} \subseteq \twooverconv{s}$ whenever $s \leq r$. In Subsection \ref{Subsection: Pre-images}, we construct a sequence $(v_{n,m}) \in \Heis$ and in Subsection \ref{Subsection: Convergence of Pre-Images}, we prove that its $2$-adic limit $v_{n}$, which satisfies $\widehat{F}_{\Heis}(v_{n},q) = \haupt^{n}$, lies in $S_{1}$ for all $n \geq 1$. The proof of Theorem \ref{Th: TheoremImage} is found in Subsection \ref{Subsection: OverconvergentImage} where we show that for all $n,m \geq 1$, $\nu_{2}(v_{n,m}) \geq -6n$. From the linearity of $\widehat{F}_{\Heis}$, this will show that the space
\begin{align*}
    \twooverconv{1/2} = \mathbb{Q}_{2} \langle 2^{6} \lambda \rangle = \Set{a_{0} + a_{1} \widehat{F}_{\Heis}(v_{1},q) + a_{2}\widehat{F}_{\Heis}(v_{2},q) + \cdots | \vert a_{n} \vert_{2} \cdot 2^{6n} \to 0}
\end{align*}
lies in the image of $\widehat{F}_{\Heis}$.

\subsection{The Heisenberg VOA}

For the purposes of this manuscript, we only require a few basic facts about the Heisenberg VOA $\Heis$, though we refer the reader to \cite{LL} for an excellent overview of its construction. We also assume $\Heis$ has central charge $c_{V} = 1$ throughout. The VOA $\Heis$ can be interpreted as the symmetric algebra in the variables $h(-n)$ where $n \geq 1$, along with a distinguished element $\textbf{1}$ acting as the identity. The terms $h(n)$ for $n \geq 1$ act as scalar partial derivatives $n \partial_{h(-n)}$, and $h(0)$ acts as zero. Each monomial vector in $\Heis$, or ``state'', is assigned a weight given by
\begin{align}
    \text{wt}(h(-n_{1}) h(-n_{2}) \cdots h(-n_{r})\textbf{1}) = \sum_{i=1}^{r} n_{i} \label{stateweight}
\end{align}
so that $\Heis$ possesses a $\mathbb{Z}_{\geq 0}$ grading. A key feature of $\Heis$ and in fact of any VOA is its ``square-bracket'' counterpart introduced in \cite{Zhu}. This is a VOA isomorphic to $\Heis$ however now with endomorphisms $h[n]$ where for $n \leq -1$ these act again by multiplication and as scalar partial derivatives for $n \geq 1$. Explicitly, these are given in terms of the round-bracket variables by expanding
\begin{align*}
    h[n] = \text{Res}_{z} Y(h(-1)\textbf{1},e^{z}-1)(e^{z})(z^{n})
\end{align*}
where $Y$ denote the vertex operator on $\Heis$. We not refer to this description for the remainder of the discussion, and in fact, we will only require the following expressions which can be easily computed from above:
\begin{align}
    &h[-2] = h(-2) + h(-1) - \frac{1}{240}h(2) + \frac{1}{240}h(3) - \cdots \label{squareh2exp} \\
    &h[-3] = h(-3) + \frac{3}{2}h(-2) + \frac{1}{2}h(-1) + \frac{1}{240}h(1) -\frac{1}{480}h(2) + \frac{1}{945}h(3) - \cdots \label{squareh3exp}
\end{align}
For the square-bracket Heisenberg VOA, we then have another grading analogous to (\ref{stateweight}) defined in the obvious way. We note, however that for a VOA $V$ we only have the equality
\begin{align*}
    \bigoplus_{n \leq N} V_{(n)} = \bigoplus_{n \leq N} V_{[n]} \hspace{15pt} N \in \mathbb{Z}
\end{align*}
where $V_{(n)}$ denotes the homogeneous degree-$n$ subspace of $V$, and $V_{[n]}$ is the same in the square-bracket grading. This follows from the expansion of the square-bracket Virasoro modes as
\begin{align*}
    L[0] = L(0) + \sum_{k \geq 1} \frac{(-1)^{k-1}}{k(k+1)}L(k)
\end{align*}
as well as the fact that the grading on a VOA coincides with its $L(0)$-eigenspace decomposition. The point is that states of $\Heis$ of square-bracket weight $k$ are modular forms of level one and weight $k$. To be more precise, let $F_{\Heis}$ denote the normalized character of $v \in V$:
\begin{align*}
    F_{\Heis}(v,q) = \eta (q) Z_{\Heis}(v,q) = q^{-1/24} \eta (q) \sum_{n \geq 0} \text{Tr}_{\Heis_{(n)}}o(v)q^{n}
\end{align*}
where $o(v)$ is the unique endomorphism associated to $v$ which preserves the round-bracket grading of $\Heis$. Note above that the trace is taken over the \textit{round}-bracket grading. We use the following re-formulation of equation (44) of \cite{MT} in order to compute $F_{\Heis}(v,q)$ for a state $v = h[-k_{1}]h[-k_{2}] \cdots h[-k_{r}]\textbf{1}$ for $k_{i} \geq 1$ not necessarily distinct:
\begin{align}
    F_{\Heis}(v,q) = \sum_{(\ldots \lbrace s,t \rbrace \ldots) \in \mathcal{P}(\Phi,2)} \prod_{\lbrace s,t \rbrace} \frac{2(-1)^{s+1}}{(s-1)!(t-1)!}G_{s+t}(q). \label{MTeq}
\end{align}
Here, $\mathcal{P}(\Phi,2)$ denotes all partitions of the set $\Phi = \lbrace k_{1}, k_{2}, \ldots , k_{r} \rbrace$ into parts $\lbrace s,t \rbrace$ of size two. Then for each such partition, the product is taken over each part. With this, one can show that $F_{\Heis}: \Heis \to M'$ is a surjection of graded linear spaces. We note however that $F_{\Heis}$ has large kernel, as for example, any state of odd weight is mapped to zero. 

A major technical aspect of the process of computing characters in $\pHeis$ is that the $p$-adic convergence of pre-images needs to be assessed in the round-bracket formalism and not in the square-bracket one, despite the fact that the description of the pre-images via eq. (\ref{MTeq}) is given in the latter. This technicality will be addressed in Subsections \ref{Subsection: Convergence of Pre-Images} and \ref{Subsection: Pre-images} by constructing an isomorphism of vector spaces between a certain subspace $\Sigma \subset \Heis$ and $M'$, then using the formulas (\ref{squareh2exp}) and (\ref{squareh3exp}) above to obtain a lower bound on the $2$-adic valuation of the pre-images $v_{n,m}$, rather than an explicit formula in the round-bracket formalism.

\subsection{The \textit{p}-adic Heisenberg VOA}

We briefly recall some notions on $p$-adic VOAs also stated in \cite{BF24}, however the inclined reader should consult \cite{FM23} for all of the necessary details. In general, given an algebraic VOA $U$ over $\mathbb{Z}_{p}$, one can define the $\mathbb{Z}_{p}$-module
\begin{align*}
    \widehat{U} = \varprojlim_{m} U / p^{m} U
\end{align*}
and set $\widehat{V} = \widehat{U} \otimes_{\mathbb{Z}_{p}} \mathbb{Q}_{p}$ which carries the structure of a $p$-adic VOA. Equivalently one can define $\widehat{V}$ in terms of a completion with respect to a sup-norm arising from an integral basis of $U$. Now when $V = \Heis$ which we view as algebraic over $\mathbb{Q}_{p}$, we define the space $S_{r}$ for $r > 0$ as the completion of $\Heis$ with respect to the sup-norm
\begin{align*}
    \left\lvert \sum_{I} a_{I}h^{I} \right\rvert_{r} = \sup_{I} \vert a_{I} \vert r^{I}
\end{align*}
for a finite multi-subset $I$ of $\mathbb{Z}_{<0}$ with $\vert I \vert = - \sum_{i \in I} i$ and where $h^{I} = \prod_{i \in I} h(i)$. Elements of $S_{r}$ are then of the form $\sum_{I} a_{I}h^{I}$ where
\begin{align*}
    \lim_{\vert I \vert \to \infty} \vert a_{I} \vert r^{\vert I \vert} = 0.
\end{align*}
The space $S_{1}$ is a $p$-adic VOA, and it will be our focus for the remainder of the manuscript. Note for different values of $r$, one has a filtration $S_{1} \supseteq S_{2} \supseteq S_{3} \supseteq \cdots $.

\section{Modular Differential Equations} \label{Section: MLDES}

\subsection{Serre's Sequence} \label{Subsection: Serre's Sequence}

Let $p \in \lbrace 2,3,5,7,13 \rbrace$. Following Th\'eor\`eme 10 of \cite{pSerre} and for any $n \geq 1$, we construct a sequence $\haupt_{n,m}$ of modular forms of level one satisfying $\haupt^{n} = \lim_{m \to \infty} \haupt_{n,m}$ by tracing over the representatives of $\Gamma_{0}(p) / \text{SL}_{2}(\mathbb{Z})$. We will see shortly that the $\haupt_{n,m}$ possess a pole of order $n$ at the zero cusp, hence we will be considering the sequence $\Delta (q)^{n} \haupt_{n,m} \in M_{12 \cdot 2^{m}+12n}$ satisfying that $\Delta (q)^{n} \haupt^{n} = \lim_{m \to \infty} \Delta (q)^{n} \haupt_{n,m}$. Only after realizing each as the solution of an MLDE in Proposition \ref{Prop: hmdiffs} and getting explicit expressions in Proposition \ref{Prop: hmnseisenstein} will we see that the $\haupt_{n,m}$ are modular of level one as expected, and we will divide by $\Delta (q)^{n}$. Let us set
\begin{align}
    \haupt_{n,m} = \haupt^{n} g^{3 \cdot p^{m}} + p^{1 - 6 \cdot p^{m}} \upee{ \left( \left. \haupt^{n}g^{3 \cdot p^{m}} \right\rvert_{12 \cdot p^{m}} \hspace{2pt} W \right) \hspace{2pt} }. \label{hmraweq}
\end{align}
Note that $\haupt_{n,m}$ has weight $12 \cdot p^{m}$, the form $g \in M_{4}(\Gamma_{0}(p))$ is given by
\begin{align*}
    g = E_{4}(q) - p^{4} \cdot E_{4}(q^{p}),
\end{align*}
and $f \mid_{r} W$ denotes application of the slash operator for a modular form $f$ of weight $r$ with
\begin{align*}
    W = \begin{pmatrix} 0 & -1 \\ p & 0 \end{pmatrix},
\end{align*}
and $U_{p}$ is the linear operator acting on $q$-series, given by the rule
\begin{align*}
    f \mid \hspace{1pt} U_{p} = \upee{ \left( \sum_{i} a_{i} q^{i} \right) } = \sum_{i} a_{pi} q^{i}.
\end{align*}

\begin{Lemma} \label{Lemma: Upoperator}
    Let $p$ be prime. For formal Laurent series $f(q), g(q) \in \mathbb{C}[[q]]$, we have the identities
    \begin{enumerate}
        \item $\upee{ \left( f(q) g(q^{p}) \right) } =  g (q) \upee{ \left( f (q) \right) }$
        \item $\upee{ \theta (f(q)) } = p \cdot \theta \left( \upee{ f(q) }  \right)$ where $\theta = q \frac{d}{dq}$
        \item For all $k \geq 2$ even, $\upee{ E_{k}^{\star}(q) }= E_{k}^{\star}(q)$.
    \end{enumerate}
\end{Lemma}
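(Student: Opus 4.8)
The plan is to handle the three identities separately: parts (1) and (2) are immediate from the definition $U_{p}\!\left(\sum_{i} a_{i} q^{i}\right) = \sum_{i} a_{pi} q^{i}$, while part (3) then reduces to a standard Hecke-eigenvalue computation for $E_{k}$ that invokes part (1). For part (1) I would expand $f(q) = \sum_{i} a_{i} q^{i}$ and $g(q) = \sum_{j} b_{j} q^{j}$, so that $f(q) g(q^{p}) = \sum_{i,j} a_{i} b_{j} q^{i + pj}$; applying $U_{p}$ retains exactly the monomials with $p \mid (i + pj)$, equivalently $p \mid i$, and writing $i = pk$ and reindexing the surviving exponent $p(k+j) \mapsto k+j$ gives $\sum_{k,j} a_{pk} b_{j} q^{k+j} = g(q)\, U_{p}(f(q))$. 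For part (2), since $\theta\!\left(\sum_{i} a_{i} q^{i}\right) = \sum_{i} i\, a_{i} q^{i}$, one computes $U_{p}(\theta(f)) = \sum_{i} (pi)\, a_{pi} q^{i}$ whereas $\theta(U_{p}(f)) = \sum_{i} i\, a_{pi} q^{i}$, and comparing the two coefficient-by-coefficient extracts the factor of $p$.

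For part (3) I would first write $E_{k}^{\star}(q) = E_{k}(q) - p^{k-1} E_{k}(q^{p})$ and apply $U_{p}$ termwise. The special case $f = 1$ of part (1) gives $U_{p}(E_{k}(q^{p})) = E_{k}(q)$, so the identity reduces to the claim
\[
U_{p}(E_{k}(q)) = (1 + p^{k-1}) E_{k}(q) - p^{k-1} E_{k}(q^{p}),
\]
after which $U_{p}(E_{k}^{\star}) = (1+p^{k-1})E_{k}(q) - p^{k-1}E_{k}(q^{p}) - p^{k-1}E_{k}(q) = E_{k}(q) - p^{k-1}E_{k}(q^{p}) = E_{k}^{\star}(q)$ drops out. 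To establish the displayed claim I would substitute $E_{k}(q) = 1 - \frac{2k}{B_{k}} \sum_{n \geq 1} \sigma_{k-1}(n) q^{n}$, so that $U_{p}(E_{k}(q)) = 1 - \frac{2k}{B_{k}} \sum_{n \geq 1} \sigma_{k-1}(pn) q^{n}$, and then resum using the arithmetic identity $\sigma_{k-1}(pn) = (1 + p^{k-1}) \sigma_{k-1}(n) - p^{k-1} \sigma_{k-1}(n/p)$, with the convention that $\sigma_{k-1}(n/p) = 0$ when $p$ does not divide $n$. The term $\sum_{n} \sigma_{k-1}(n/p) q^{n} = \sum_{m \geq 1} \sigma_{k-1}(m) q^{pm}$ reassembles into $E_{k}(q^{p})$, and checking the constant term then yields the stated formula.

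The only genuinely non-formal ingredient is the divisor identity $\sigma_{k-1}(pn) = (1 + p^{k-1}) \sigma_{k-1}(n) - p^{k-1} \sigma_{k-1}(n/p)$, which is precisely the statement that $E_{k}$ is a Hecke eigenform for $T_{p}$ with eigenvalue $1 + p^{k-1}$; I expect this to be the main, though still routine, obstacle. It follows from the multiplicativity of $\sigma_{k-1}$: writing $n = p^{a} n'$ with $n'$ coprime to $p$, both sides factor through $\sigma_{k-1}(n')$, and the prime-power part reduces to an elementary identity among the geometric sums $\sigma_{k-1}(p^{a+1})$, $\sigma_{k-1}(p^{a})$, and $\sigma_{k-1}(p^{a-1})$ that one verifies by clearing denominators (the case $a = 0$ being covered automatically by the convention $\sigma_{k-1}(n/p) = 0$).
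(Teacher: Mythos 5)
Your proposal is correct: parts (1) and (2) are exactly the coefficient-by-coefficient verifications the paper has in mind when it says they follow from the definitions, and your proof of part (3) via the divisor-sum identity $\sigma_{k-1}(pn) = (1+p^{k-1})\sigma_{k-1}(n) - p^{k-1}\sigma_{k-1}(n/p)$ is the standard argument underlying the fact that $E_k^\star$ is a $U_p$-eigenform with eigenvalue $1$. The only difference is that the paper does not prove part (3) at all, instead citing p.~210 of Serre's paper, so your write-up is strictly more self-contained while following the same route (and it correctly covers $k=2$, since the computation is purely formal on $q$-expansions).
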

\begin{proof}
    Parts (1) and (2) are easily proven from the definitions. For part (3), see pg. 210 of \cite{pSerre}.
\end{proof}

It is easy to compute
\begin{align*}
    \left. \haupt^{n}g^{3 \cdot p^{m}} \right\rvert_{12 \cdot p^{m}} \hspace{2pt} W = p^{6 \cdot p^{m}} \left( p \tau \right)^{- 12 \cdot p^{m}} \left( \frac{\eta (-1/\tau)}{\eta (-1/p\tau)} \right)^{\frac{24n}{p-1}} \left( E_{4}(-1/p \tau) - p^{4} E_{4}(-1/\tau) \right)^{3 \cdot p^{m}},
\end{align*}
and with modular transformation laws
\begin{align*}
    \eta (-1/\tau) &= \sqrt{-i \tau} \cdot \eta ( \tau ) \\
    E_{4}(-1/ \tau) &= \tau^{4} E_{k}(\tau)
\end{align*}
and
\begin{align*}
    \eta (-1/ p\tau) &= \sqrt{-i p \tau} \cdot \eta ( p \tau ) \\
    E_{4} ( - 1 / p \tau ) &= (p \tau)^{4} E_{4}(p \tau)
\end{align*}
obtained by setting $\tau \mapsto p \tau$, we have
\begin{align*}
     \left. \haupt^{n}g^{3 \cdot p^{m}} \right\rvert_{12 \cdot p^{m}} \hspace{2pt} W = p^{6 \cdot p^{m} - \frac{12n}{p-1}} \cdot \haupt^{-n} \cdot \left( E_{4}(p \tau) - E_{4}(\tau) \right)^{3 \cdot p^{m}}
\end{align*}
so that we may re-write eq. (\ref{hmraweq}) as
\begin{align}
    \haupt_{n,m} = \haupt^{n}  \left( E_{4}(q) - p^{4} E_{4}(q^{p}) \right)^{3 \cdot p^{m}} + p^{1 - \frac{12n}{p-1}} \upee{ \left( \haupt^{-n} \left( E_{4}(q^{p}) - E_{4}(q) \right)^{3 \cdot p^{m}} \right) }.\label{hms}
\end{align}
and $\haupt^{n} = \lim_{m \to \infty} \haupt_{n,m}$ as desired.

\subsection{Sequences of MLDEs} \label{Subsection: Sequences of MLDES}

In this subsection, we formulate the explicit MLDEs needed to write the $\lambda_{n,m}$ in terms of an Eisenstein basis. In fact within Proposition \ref{Prop: hmdiffs}, we prove Theorem \ref{Th: TheoremMLDE}.

\begin{Lemma} \label{Lemma: hdiffeq}
Let $p=2$. For $n \geq 1$, $\Delta (q)^{n} \haupt^{n}$ satisfies the following third-degree MLDE:
\begin{align}
    \mathcal{D}^{3} \left( \Delta (q)^{n} \haupt^{n} \right) = \left( \left( n^{2} - n^{3} \right) E_{2}^{\star}(q)^{3} - \left( \frac{n^{2}}{2} + \frac{n}{18} \right) E_{2}^{\star}(q)E_{4}(q) \right) \Delta (q)^{n} \haupt^{n}.\label{hdiffeq}
\end{align}
\end{Lemma}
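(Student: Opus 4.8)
The plan is to exploit the identity $\Delta(q)^{n}\haupt^{n} = \Delta(q^{2})^{n}$, which is immediate when $p=2$ since $\haupt = \Delta(q^{2})/\Delta(q)$, and then to reduce the entire third-order statement to a single first-order relation. Writing $g = \Delta(q^{2})^{n}$, I would first compute $\mathcal{D}_{12n}(g)$. The chain rule gives $\theta(\Delta(q^{2})) = 2E_{2}(q^{2})\Delta(q^{2})$, using $\theta(\Delta) = E_{2}\Delta$ together with the substitution $q \mapsto q^{2}$, which contributes the factor of $2$. Hence $\mathcal{D}_{12}(\Delta(q^{2})) = (2E_{2}(q^{2}) - E_{2}(q))\Delta(q^{2}) = -E_{2}^{\star}(q)\,\Delta(q^{2})$, because $E_{2}^{\star}(q) = E_{2}(q) - 2E_{2}(q^{2})$. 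By the Leibniz rule for $\mathcal{D}$ this at once yields the clean first-order relation $\mathcal{D}(g) = -n\,E_{2}^{\star}(q)\,g$, which is the real crux of the argument: a genuinely third-order-looking MLDE is forced by an essentially logarithmic-derivative identity.

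With this relation in hand, the remaining steps are a mechanical iteration. Applying $\mathcal{D}$ again and using the Leibniz rule together with the formula (\ref{e2starder}) for $\mathcal{D}(E_{2}^{\star})$, I would obtain $\mathcal{D}^{2}(g) = \bigl[(n^{2} - \tfrac{n}{3})E_{2}^{\star 2} + \tfrac{n}{6}E_{4}\bigr]g$. A third application, splitting the weight-$4$ coefficient from $g$ via Leibniz and reusing $\mathcal{D}(g) = -n E_{2}^{\star}g$, reduces everything to knowing $\mathcal{D}(E_{2}^{\star 2}) = 2E_{2}^{\star}\mathcal{D}(E_{2}^{\star})$ and $\mathcal{D}(E_{4})$. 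Collecting the coefficients of $E_{2}^{\star 3}$ and $E_{2}^{\star}E_{4}$ should then reproduce $(n^{2} - n^{3})$ and $-(\tfrac{n^{2}}{2} + \tfrac{n}{18})$ exactly, matching the right-hand side of (\ref{hdiffeq}).

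The one genuinely non-routine point is obtaining $\mathcal{D}(E_{4})$ in terms of the $\Gamma_{0}(2)$ generators $E_{2}^{\star}$ and $E_{4}$: the naive Ramanujan-Serre derivative gives $\mathcal{D}_{4}(E_{4}) = -\tfrac{1}{3}E_{6}$ by (\ref{ram2}), but $E_{6}$ must be re-expressed inside $\mathbb{C}[E_{2}^{\star},E_{4}]$ before the coefficients can be compared. I would derive the needed identity $\mathcal{D}(E_{4}) = \tfrac{4}{3}E_{2}^{\star 3} - E_{2}^{\star}E_{4}$ by differentiating the relation (\ref{e4starexp}), namely $E_{4}^{\star} = -10E_{2}^{\star 2} + 3E_{4}$, and comparing with the given formula (\ref{e4starder}) for $\mathcal{D}(E_{4}^{\star})$; solving the resulting linear relation isolates $\mathcal{D}(E_{4})$. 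I expect this bookkeeping --- keeping the level-one and level-$2$ forms consistent, and correctly tracking the factors of $p=2$ coming from the substitution $q \mapsto q^{2}$ --- to be the main place where care is required. Once $\mathcal{D}(E_{4})$ is in hand, the final assembly is a short forced calculation.
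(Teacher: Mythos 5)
Your proposal is correct and follows essentially the same route as the paper: both hinge on the first-order relation $\mathcal{D}(\Delta(q^{2})^{n}) = -n\,E_{2}^{\star}(q)\,\Delta(q^{2})^{n}$ obtained from $\theta(\Delta)=E_{2}\Delta$ and the substitution $q\mapsto q^{2}$, followed by two further applications of $\mathcal{D}$ using (\ref{e2starder}) and a re-expression of $E_{6}$ (equivalently of $\mathcal{D}(E_{4})$) inside $\mathbb{C}[E_{2}^{\star},E_{4}]$. The only cosmetic difference is that the paper carries $E_{6}$ through the third iteration and substitutes $E_{6}=-4E_{2}^{\star}(q)^{3}+3E_{2}^{\star}(q)E_{4}(q)$ at the end, whereas you convert $\mathcal{D}(E_{4})$ into the $\Gamma_{0}(2)$ generators up front; both yield the same bookkeeping.
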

\begin{proof}
Note first that for all $i \geq 1$, $\mathcal{D}_{12n}^{i}(\Delta (q)^{n} \haupt^{n}) = \mathcal{D}_{0}^{i}(\haupt^{n})$ and so it suffices to show that $\haupt^{n}$ satisfies the MLDE in the claim with the modular derivative acting with weight zero. 

Since $\mathcal{D}( \Delta (q) ) = 0$ we have $\theta (\Delta (q)) = E_{2}(q) \Delta (q)$ and so
\begin{align*}
    \mathcal{D}(\Delta (q^{2})) = 2 E_{2}(q^{2}) \Delta (q^{2})  - E_{2}(q) \Delta ( q^{2}).
\end{align*}
Also since $\mathcal{D}(\haupt^{n}) = \mathcal{D}(\Delta ( q^{2} )^{n}) \cdot \Delta (q)^{-n}$, this implies
\begin{align}
    \mathcal{D}(\haupt^{n}) = -n \haupt^{n}E_{2}^{\star}(q). \label{modderh}
\end{align}
The identities (\ref{e4starexp})-(\ref{e4starder}) in turn allow us to compute
\begin{align*}
    \mathcal{D}^{2}\left( \haupt^{n} \right) = \left( n^{2} - \frac{n}{3} \right) \haupt^{n} E_{2}^{\star}(q)^{2} + \frac{n}{6} \haupt^{n} E_{4}(q),
\end{align*}
and from this we get
\begin{multline*}
    \mathcal{D}^{3} \left( \haupt^{n} \right) = \left( -n \left( n^{2} - \frac{n}{3} \right) + \frac{2}{3} \left( n^{2} - \frac{n}{3} \right) \right) \haupt^{n} E_{2}^{\star}(q)^{3} \\ + \left( - \frac{1}{3} \left( n^{2} - \frac{n}{3} \right) - \frac{n^{2}}{6} \right) \haupt^{n} E_{2}^{\star}(q) E_{4}(q) - \frac{n}{18} \haupt^{n} E_{6}(q),
\end{multline*}
so using the identity $E_{6}(q) = -4 E_{2}^{\star}(q)^{3} + 3 E_{2}^{\star}(q) E_{4}(q)$ this simplifies to
\begin{align*}
    \mathcal{D}^{3} \left( \haupt^{n} \right) = \left( \left( n^{2} - n \right) E_{2}^{\star}(q)^{2} + \left( \frac{n}{2} + \frac{1}{18} \right) E_{4}(q) \right) \mathcal{D}(\haupt^{n}),
\end{align*}
and the result follows from eq. (\ref{modderh}) above.
\end{proof}

We wish to show the $\Delta (q)^{n} \haupt_{n,m}$ satisfy a similar MLDE. Since there is no clear interaction between the $U_{2}$ operator and the modular derivative, we wish to simplify the second term of (\ref{hms}). To do this, let $\mathcal{E}_{4}(q)$ denote the normalized Eisenstein series of weight $4$ on $\Gamma_{0}(2)$ so that
\begin{align*}
    \mathcal{E}_{4}(q) = \frac{E_{4}(q) - E_{4}(q^{2})}{240} = q + 8q^{8} + 28q^{3} + 64 q^{4} + \cdots
\end{align*}
In fact, $\mathcal{E}_{4}(q)$ can be expressed as the eta-quotient
\begin{align*}
\mathcal{E}_{4}(q) = \frac{\eta (q^{2})^{16}}{\eta (q)^{8}}
\end{align*}
(cf. \cite{AZ19}, for example) which means
\begin{align*}
    \haupt^{-n} \mathcal{E}_{4}(q)^{3 \cdot 2^{m}} = \Delta (q^{2})^{2^{m+1}-n} \Delta (q)^{n-2^{m}}.
\end{align*}
Using Lemma \ref{Lemma: Upoperator} this gives
\begin{align*}
     \utwo{ \left( \haupt^{-n} \mathcal{E}_{4}(q)^{3 \cdot 2^{m}} \right) } = \Delta (q)^{2^{m+1}-n} \utwo{ \left( \Delta (q)^{n-2^{m}} \right) }.
\end{align*}
Putting everything together and using identities derived in Lemma \ref{Lemma: hdiffeq}, eq. (\ref{hms}) becomes
\begin{align}
    \haupt_{n,m} = \haupt^{n}  \left( 5E_{4}(q) - 20 E_{2}^{\star}(q)^{2} \right)^{3 \cdot 2^{m}} + 2^{1 - 12n} (240)^{3 \cdot 2^{m}} \Delta (q)^{2^{m+1}-n} \utwo{\left( \Delta (q)^{n-2^{m}} \right) }. \label{hmsbetter}
\end{align}
\begin{Rem}
    It may be possible to write (\ref{hmsbetter}) explicitly without the $U_{2}$-operator, using the recursive formula given in page 5 of \cite{BC05} or methods in \cite{D14}. Since it is interesting enough that the $\Delta (q)^{n} \haupt_{n,m}$ will satisfy a ``nice'' MLDE, we keep the expression for the $\haupt_{n,m}$ as it is written above.
\end{Rem}

\begin{Prop} \label{Prop: hmdiffs}
    Set $p=2$. For $n,m \geq 1$, the modular form $\Delta (q)^{n} \haupt_{n,m}$ satisfies the third-degree MLDE:
    \begin{multline}
        \mathcal{D}^{3} ( \Delta (q)^{n} \haupt_{n,m}) = \left( \frac{3}{4}t_{n,m}^{2} + \frac{1}{4}t_{n,m} + \frac{1}{18} \right) \mathcal{D}(\Delta (q)^{n} \haupt_{n,m}) E_{4}(q) \\ - \frac{1}{4} \left( t_{n,m}^{3} + t_{n,m}^{2} \right) \Delta (q)^{n}\haupt_{n,m} E_{6}(q),
    \end{multline}
    where $t_{n,m} = 2^{m}-n$.
\end{Prop}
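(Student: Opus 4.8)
The plan is to recognize $\Delta(q)^n\haupt_{n,m}$ as a level-one trace of a single auxiliary form on $\Gamma_0(2)$, to verify the MLDE for that auxiliary form by a computation parallel to Lemma \ref{Lemma: hdiffeq}, and then to descend to level one using that the differential operator has level-one coefficients. Write $L = \mathcal{D}^3 - \alpha E_4(q)\mathcal{D} - \beta E_6(q)$ for the putative operator, where $\alpha = \frac34 t_{n,m}^2 + \frac14 t_{n,m} + \frac1{18}$ and $\beta = -\frac14(t_{n,m}^3 + t_{n,m}^2)$, so the claim is $L(\Delta(q)^n\haupt_{n,m}) = 0$. Set $\tilde F = \Delta(q^2)^n g^{3\cdot 2^m}$, a holomorphic form of weight $12(n+2^m)$ on $\Gamma_0(2)$, where $g = E_4(q) - 16E_4(q^2) = 5E_4(q) - 20E_2^{\star}(q)^2$. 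Since (\ref{hmraweq}) is exactly Serre's trace $\mathrm{Tr}^{\Gamma_0(2)}_{\SL_2(\mathbb{Z})}$ applied to the weight-$12\cdot2^m$ seed $\haupt^n g^{3\cdot2^m}$, and $\Delta(q)^n$ is $\SL_2(\mathbb{Z})$-invariant of weight $12n$, one may move $\Delta(q)^n$ inside the trace to get $\Delta(q)^n\haupt_{n,m} = \mathrm{Tr}(\tilde F)$.

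First I would show that $\tilde F$ satisfies a first-order equation mirroring (\ref{modderh}). As in the proof of Lemma \ref{Lemma: hdiffeq}, $\mathcal{D}(\Delta(q^2)) = -E_2^{\star}(q)\Delta(q^2)$, hence $\mathcal{D}(\Delta(q^2)^n) = -nE_2^{\star}(q)\Delta(q^2)^n$. The key computation is that $g$ satisfies the analogous equation $\mathcal{D}(g) = \frac13 E_2^{\star}(q)g$: using $\mathcal{D}(E_4) = -\frac13 E_6$, the identities (\ref{e2starder})--(\ref{e4starder}), and $E_6 = -4E_2^{\star}(q)^3 + 3E_2^{\star}(q)E_4(q)$, one finds $\mathcal{D}(g) = -\frac{20}{3}E_2^{\star}(q)^3 + \frac53 E_2^{\star}(q)E_4(q) = \frac13 E_2^{\star}(q)g$. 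Therefore the exponents combine, $\mathcal{D}(g^{3\cdot2^m}) = 2^m E_2^{\star}(q)g^{3\cdot2^m}$, and by the Leibniz rule
\begin{align*}
\mathcal{D}(\tilde F) = (-n + 2^m)E_2^{\star}(q)\tilde F = t_{n,m}E_2^{\star}(q)\tilde F .
\end{align*}
This is precisely (\ref{modderh}) with $-n$ replaced by $t_{n,m}$.

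Consequently the entire iteration of Lemma \ref{Lemma: hdiffeq} goes through verbatim with parameter $-t_{n,m}$ in place of $n$, giving $\mathcal{D}^3(\tilde F) = \big((t_{n,m}^2 + t_{n,m})E_2^{\star}(q)^2 + (-\tfrac{t_{n,m}}{2} + \tfrac1{18})E_4(q)\big)\mathcal{D}(\tilde F)$. Substituting $\mathcal{D}(\tilde F) = t_{n,m}E_2^{\star}(q)\tilde F$ into the cubic term and eliminating $E_2^{\star}(q)^3$ via $E_6 = -4E_2^{\star}(q)^3 + 3E_2^{\star}(q)E_4(q)$ collapses every $E_2^{\star}(q)$ back into the claimed $E_4(q)\mathcal{D}(\tilde F)$ and $E_6(q)\tilde F$ terms, so that $L(\tilde F) = 0$. (The degenerate case $t_{n,m} = 0$, where $\mathcal{D}(\tilde F) = 0$ and $\beta = 0$, is immediate.) Finally, because $L$ has level-one coefficients it is equivariant for the weight-$w$ slash action of $\SL_2(\mathbb{Z})$: the Ramanujan--Serre derivative commutes with slashing, and multiplication by $E_4,E_6$ is slash-invariant. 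Hence $L$ commutes with the trace, and $L(\Delta(q)^n\haupt_{n,m}) = L(\mathrm{Tr}\,\tilde F) = \mathrm{Tr}(L\tilde F) = 0$, as required.

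I expect the main obstacle to be the second step --- spotting that $g$ obeys $\mathcal{D}(g) = \frac13 E_2^{\star}(q)g$, so that the two exponents $n$ and $3\cdot2^m$ collapse into the single parameter $t_{n,m} = 2^m - n$. Once this first-order equation is in hand, the third-order MLDE is forced by the computation of Lemma \ref{Lemma: hdiffeq}, and the only remaining care is to justify the descent from $\Gamma_0(2)$ to level one. This is exactly where the level-one structure is decisive: the trace is taken against $\SL_2(\mathbb{Z})$, the operator $L$ has level-one coefficients and therefore commutes with it, and this is what avoids ever having to commute $\mathcal{D}$ past the $U_2$-operator appearing in (\ref{hmsbetter}) --- the interaction the text flags as unclear.
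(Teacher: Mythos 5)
Your proposal is correct, and it reaches the conclusion by a genuinely different route for the harder half of the argument. The first half coincides with the paper: the paper writes $\haupt_{n,m} = T_{1} + 2^{1-12n}(240)^{3\cdot 2^{m}}T_{2}$ with $T_{1} = \haupt^{n}X^{3\cdot 2^{m}}$, $X = 5E_{4}(q)-20E_{2}^{\star}(q)^{2} = g$, and your computation $\mathcal{D}(\tilde F) = t_{n,m}E_{2}^{\star}(q)\tilde F$ followed by the iteration is exactly the paper's treatment of $T_{1}$ (your $\tilde F$ is $\Delta(q)^{n}T_{1}$, and $\mathcal{D}$ kills the $\Delta(q)^{n}$ factor). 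Where you diverge is the second piece. The paper attacks the $U_{2}$-term $T_{2}$ head-on: it computes $\mathcal{D}^{i}\bigl(\utwo{\Delta(q)^{-t_{n,m}}}\bigr)$ for $i=1,2,3$ using Lemma \ref{Lemma: Upoperator}(2) and the identities expressing $E_{4}(q^{2})$, $E_{6}(q^{2})$ in $\mathbb{C}[E_{2}^{\star},E_{4}]$, and verifies the MLDE for $T_{2}$ by a second lengthy calculation. You instead observe that \eqref{hmraweq} is Serre's trace of the single $\Gamma_{0}(2)$-form $\tilde F$ (after moving $\Delta(q)^{n}$ inside, which uses $\Delta(q)^{n}\rvert_{12n}W = 2^{6n}\Delta(q^{2})^{n}$ and Lemma \ref{Lemma: Upoperator}(1)), and descend by equivariance: $L$ has level-one modular coefficients, $\mathcal{D}_{k}(F\rvert_{k}\gamma) = (\mathcal{D}_{k}F)\rvert_{k+2}\gamma$ for all $\gamma\in\SL_{2}(\mathbb{Z})$ by quasi-modularity of $E_{2}$, and the trace is a finite sum of such slashes, so $L(\mathrm{Tr}\,\tilde F) = \mathrm{Tr}(L\tilde F) = 0$. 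This is sound and arguably more conceptual --- it explains \emph{why} both pieces of \eqref{hmsbetter} satisfy the same level-one MLDE, and it scales to any $L$ with level-one coefficients. The one point you should make fully explicit is that neither $\rvert_{k}W$ nor $U_{2}$ separately commutes with $\mathcal{D}$ (this is precisely the difficulty the paper flags); the commutation holds only for the assembled trace, because its weight-dependent normalization $2^{1-k/2}$ shifts correctly as $\mathcal{D}^{3}$ raises the weight by $6$ --- equivalently, because $2^{1-k/2}(F\rvert_{k}W)\rvert U_{2} = \sum_{j=0}^{1}F\rvert_{k}(ST^{j})$ is a sum over honest $\SL_{2}(\mathbb{Z})$ coset representatives. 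The trade-off is that the paper's computation is self-contained and produces the intermediate identities such as \eqref{T2d1}, whereas your route requires importing (or proving) the $\SL_{2}(\mathbb{Z})$-equivariance of the Ramanujan--Serre derivative on arbitrary holomorphic functions.
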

\begin{proof}
    As in the proof of Lemma \ref{Lemma: hdiffeq}, it suffices to show that the $\haupt_{n,m}$ satisfy the MLDE in the claim with the modular derivative acting with weight $4a \cdot 2^{m}$. Let us temporarily set
   \begin{align*}
       X = 5E_{4}(q) - 20 E_{2}^{\star}(q)^{2}
   \end{align*}
   so that eq. (\ref{hmsbetter}) becomes
   \begin{align*}
        \haupt_{n,m} &= \haupt^{n}  X^{3 \cdot 2^{m}} + 2^{1 - 12n} (240)^{3 \cdot 2^{m}} \Delta (q)^{2^{m+1}-n} \utwo{ \left( \Delta (q)^{n-2^{m}} \right) } \\
        &:= T_{1} + 2^{1-12n} (240)^{3 \cdot 2^{m}}  T_{2}.
   \end{align*}
   We show that $T_{1}$ and $T_{2}$ satisfy the MLDE in the statement of the proposition separately so that by linearity the result holds. Consider first $T_{1}$. It is easily verified that $\mathcal{D}(X) = (1/3)E_{2}^{\star}(q)X$, so that
   \begin{align}
       \mathcal{D}(T_{1}) = t_{n,m} \haupt^{n} E_{2}^{\star}(q) X^{3 \cdot 2^{m}} \label{T1d1}
   \end{align}
   and from this
   \begin{align*}
       \mathcal{D}^{2}(T_{1}) = \haupt^{n} X^{3 \cdot 2^{m}} \left( \left( t_{n,m}^{2} + \frac{1}{3} t_{n,m} \right) E_{2}^{\star}(q)^{2} - \frac{1}{6} t_{n,m} E_{4}(q) \right).
   \end{align*}
   Lastly, a lengthy computation shows that
   \begin{align*}
       \mathcal{D}^{3}(T_{1}) = \haupt^{n}X^{3 \cdot 2^{m}} \left( \left( t_{n,m}^{3} + t_{n,m}^{2} \right) E_{2}^{\star}(q)^{3} + \left( \frac{1}{18} t_{n,m} - \frac{1}{2} t_{n,m}^{2} \right) E_{2}^{\star}(q)E_{4}(q) \right).
   \end{align*}
   Now, using the identity $E_{6}(q) = -4 E_{2}^{\star}(q)^{3} + 3 E_{2}^{\star}(q) E_{4}(q)$, we can re-arrange
   \begin{multline*}
       \mathcal{D}^{3}(T_{1}) = \haupt^{n} X^{3 \cdot 2^{m}} \left( \left( \frac{3}{4} t_{n,m}^{3} + \frac{1}{4} t_{n,m}^{2} + \frac{1}{18} t_{n,m} \right) E_{2}^{\star}(q)E_{4}(q) \right. \\ \left. - \frac{1}{4}\left( t_{n,m}^{3} + t_{n,m}^{2} \right) \left( -4 E_{2}^{\star}(q)^{3} + 3 E_{2}^{\star}(q)E_{4}(q) \right) \right)
   \end{multline*}
   and so using (\ref{T1d1}) we get that $T_{1}$ satisfies the MLDE in the statement. Now for $T_{2}$: Since $\mathcal{D}_{1/2}(\eta(q)) = 0$, we have
   \begin{align*}
       \mathcal{D}^{i}(T_{2}) = \Delta (q)^{2^{m+1}-n} \mathcal{D}^{i} \left( \utwo{  \Delta (q)^{-t_{n,m}} } \right)
   \end{align*}
   for $i = 1,2,3$. In particular, using the fact that $\theta (\eta (q)) = (1/24) E_{2}(q) \eta (q)$ as well as Lemma \ref{Lemma: Upoperator} (2),
   \begin{align}
        \mathcal{D} \left. \left( \Delta (q)^{-t_{n,m}} \right\rvert \hspace{1pt} U_{2} \right) = - \frac{1}{2} t_{n,m} \utwo{ \left( \Delta (q)^{- t_{n,m}} E_{2}^{\star}(q) \right) }. \label{T2d1}
   \end{align}
   From here, we get
   \begin{align*}
       \mathcal{D}^{2} \left( \utwo{ \Delta (q)^{-t_{n,m}} } \right) = \utwo{ \left( \Delta (q)^{-t_{n,m}} \left( \left( \frac{ t_{n,m}^{2}}{4} - \frac{ t_{n,m}}{8} \right) E_{2}^{\star}(q)^{2} + \frac{ t_{n,m}}{24} E_{4}(q) \right) \right) },
   \end{align*}
   and 
   \begin{multline*}
       \mathcal{D}^{3} \left( \utwo{ \Delta (q)^{- t_{n,m}} } \right) = \left. \left( \Delta (q)^{- t_{n,m}} \left( \left( - \frac{t_{n,m}^{3}}{8} + \frac{3 t_{n,m}^{2}}{16} - \frac{5 t_{n,m}}{144} \right) E_{2}^{\star}(q)^{3} \right.\right. \right. \\
       + \utwo{ \left. \left. \left( - \frac{t_{n,m}^{2}}{16} + \frac{t_{n,m}}{144} \right) E_{2}^{\star}(q) E_{4}(q) \right) \right) }.
   \end{multline*}
   Making use of the identities
       \begin{align*}
           E_{4}(q^{2}) &= \frac{5}{4} E_{2}^{\star}(q)^{2} - \frac{1}{4} E_{4}(q) \\
           E_{6}(q^{2}) &= \frac{3}{8}E_{2}^{\star}(q)E_{4}(q) - \frac{11}{8}E_{2}^{\star}(q)^{3}
       \end{align*}
   alongside Lemma \ref{Lemma: Upoperator} (2), we can re-arrange
   \begin{multline*}
           \mathcal{D}^{3} \left( \utwo{ \Delta (q)^{-t_{n,m}} } \right) = \left. \left( \left( - \frac{15}{32} t_{n,m}^{3} - \frac{5}{32} t_{n,m}^{2} - \frac{5}{144} t_{n,m} \right) \Delta (q)^{-t_{n,m}}E_{2}^{\star}(q)^{3} \right. \right. \\ 
           \left. \left. + \left( \frac{3}{32} t_{n,m}^{3} + \frac{1}{32} t_{n,m}^{2} + \frac{1}{144} t_{n,m} \right) \Delta (q)^{- t_{n,m}} E_{2}^{\star}(q)E_{4}(q) \right. \right. \\
           \left. \left. - \frac{3}{32} \left( t_{n,m}^{3} + t_{n,m}^{2} \right) \Delta (q)^{- t_{n,m}} E_{2}^{\star}(q)E_{4}(q) \right. \right. \\
           \utwo{ \left. + \frac{11}{32} \left( t_{n,m}^{3} + t_{n,m}^{2} \right) \Delta (q)^{- t_{n,m}} E_{2}^{\star}(q)^{3} \right) }
       \end{multline*}
       so that by (\ref{T2d1})
       \begin{multline*}
           \mathcal{D}^{3} \left( \utwo{ \Delta (q)^{- t_{n,m}} } \right) = \left( \frac{3}{4} t_{n,m}^{2} + \frac{1}{4} t_{n,m} + \frac{1}{18} \right) \mathcal{D} \left( \utwo{ \Delta (q)^{- t_{n,m}} } \right) E_{4}(q) \\
           - \frac{1}{4} \left( t_{n,m}^{3} + t_{n,m}^{2} \right) \left( \utwo{ \Delta (q)^{- t_{n,m}} } \right) E_{6}(q)
       \end{multline*}
       which shows that $T_{2}$ satisfies the MLDE in the proposition.
\end{proof}

\begin{Rem}
The MLDE given in the statement of Lemma \ref{Lemma: hdiffeq} is the $2$-adic limit of the sequence of MLDEs whose $m$-th term is given in Proposition \ref{Prop: hmdiffs}. This can easily be shown by writing eq. (\ref{hdiffeq}) in the basis $\mathbb{C}[E_{2}^{\star},E_{4}]$ of $M(\Gamma_{0}(2))$, using the expression for $\mathcal{D}(\haupt^{n})$ and noting that $\lim_{m \to \infty} t_{n,m}^{r} = (-n)^{r}$.
\end{Rem}

\begin{Rem}
    For $p \in \lbrace 3,5,7,13 \rbrace$, we are not able to find an MLDE which is satisfied by the $\haupt_{n,m}$. When $p=3$ for example, we can express
    \begin{multline*}
        \haupt_{n,m} = \frac{\eta (q^{3})^{12n}}{\eta (q)^{12n}} \cdot (-80)^{3^{m+1}} \left( \left( \frac{\eta (q)^{3}}{\eta (q^{3})} \right)^{4} + 9q \cdot \left( \frac{\eta (q)^{9}}{\eta (q)} \right)^{3} \left( \frac{\eta (q)^{3}}{\eta (q^{3})} \right)^{4} \right)^{3^{m+1}} \\ + 3^{1-6n} \cdot (-240)^{3^{m+1}} \left( \frac{\eta (q)^{12n}}{\eta (q^{3})^{12n}} \right) \uthree{ \left( \eta (q^{3})^{8} + 9 \eta (q^{3})^{8} \left( \frac{\eta (q^{9})}{\eta (q)} \right)^{3} \right)^{3^{m+1}} }
    \end{multline*}
    and there are similar expressions for the other odd primes, which involve eta-quotients.
\end{Rem}

\subsection{Explicit Formulations}

Given our description of each $\Delta (q)^{n} \haupt_{n,m}$ as a solution of the third-degree MLDE given in Proposition \ref{Prop: hmdiffs}, we follow Section 4.3 of \cite{FM16} in order to describe these in terms of hypergeometric series: In the notation of this paper, we can find that
\begin{align*}
    r_{1} = 2n, \hspace{20pt} r_{2} = \frac{3}{2} \cdot 2^{m} + \frac{n}{2}, \hspace{20pt} r_{3} = \frac{3}{2} \cdot 2^{m} + \frac{n}{2} + \frac{1}{2}
\end{align*}
so that precisely one of the three solutions of the MLDE is modular of level one, and given by
\begin{multline*}
    \Delta (q)^{2^{m}+n} K^{-2^{m}+n} \hspace{1pt} _{3}F_{2} \left( - 2^{m}+n,- 2^{m}+n +\tfrac{1}{3}, - 2^{m}+n+\tfrac{2}{3} \right. \\  \left. ; -\tfrac{3}{2}\cdot 2^{m} + \tfrac{3}{2}\cdot n + 1, -\tfrac{3}{2} \cdot 2^{m} + \tfrac{3}{2} \cdot n + \tfrac{1}{2} ; K \right),
\end{multline*}
where $K=1728/j$. Thus $\Delta (q)^{n} \haupt_{n,m}$ is equal to the above expression up to some constant $C_{n,m}$. We assume that $n < \cdot 2^{m}$ throughout, which we can do since we will eventually be taking the limit as $m \to \infty$. The hypergeometric series above is a specialization of the form studied in the main theorem of \cite{KT01}: Setting $k = 12 \cdot 2^{m} - 12n$ and $\lambda = 2$ in their notation and ensuring the necessary conditions on these parameters hold, the above expression then gives
\begin{align}
    \Delta (q)^{n}\haupt_{n,m} = C_{n,m}\Delta (q)^{2n} \left( \sum_{j=0}^{2^{m}-n} \frac{(3n- 3 \cdot 2^{m})_{3j}}{\psi_{j}} E_{4}(q)^{3 \cdot 2^{m} - 3n - 3j} \Delta (q)^{j} \right) \label{prehmns}
\end{align}
where
\begin{align}
   \psi_{j} &= 2^{-12j} (j!) \prod_{l=1}^{j} \left( 8l-12\cdot 2^{m} + 12n \right) \left( 8l-12 \cdot 2^{m} + 12n-4 \right) \nonumber \\
    &= 2^{-8j} (j!) (3n-3\cdot 2^{m}+1)_{2j} \label{psicoeff}
\end{align}
\begin{Prop} \label{Prop: hmnseisenstein}
    Let $n,m \geq 1$. We have
    \begin{align*}
    \haupt_{n,m} = \sum_{j=0}^{2^{m}-n} \psi_{j}^{-1} \left( 3n-3 \cdot 2^{m} \right)_{3j} 15^{3 \cdot 2^{m}} E_{4}(q)^{3 \cdot 2^{m} - 3n - 3j} \Delta (q)^{n+j}.
\end{align*}
\end{Prop}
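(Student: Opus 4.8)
The plan is to observe that the asserted identity is nothing more than equation (\ref{prehmns}) after cancelling $\Delta(q)^n$ from both sides, so that the entire proof reduces to pinning down the normalizing constant $C_{n,m}$ and checking that it equals $15^{3\cdot 2^m}$. Explicitly, dividing (\ref{prehmns}) by $\Delta(q)^n$ gives
\begin{align*}
\haupt_{n,m} = C_{n,m}\sum_{j=0}^{2^m-n}\frac{(3n-3\cdot 2^m)_{3j}}{\psi_j}\,E_4(q)^{3\cdot 2^m-3n-3j}\,\Delta(q)^{n+j},
\end{align*}
which is precisely the claimed formula once $C_{n,m}=15^{3\cdot 2^m}$. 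Hence the only genuine work is a leading-coefficient comparison performed at the level of $q$-expansions.

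First I would read off the leading term of the right-hand side of (\ref{prehmns}). Since $(a)_0=1$ and, by (\ref{psicoeff}), $\psi_0=1$, only the $j=0$ summand contributes at lowest order, namely $C_{n,m}\Delta(q)^{2n}E_4(q)^{3\cdot 2^m-3n}$; every term with $j\geq 1$ carries an extra factor $\Delta(q)^j$ and so vanishes to strictly higher order. Because $\Delta(q)=q+\cdots$ and $E_4(q)=1+\cdots$, the right-hand side therefore begins $C_{n,m}q^{2n}+\cdots$. Equivalently, $\haupt_{n,m}=\Delta(q)^{-n}\cdot(\text{RHS})$ has leading coefficient $C_{n,m}$ at $q^n$.

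Next I would compute this same leading coefficient directly from the Serre expression (\ref{hmsbetter}). Writing $X=5E_4(q)-20E_2^\star(q)^2=E_4(q)-2^4E_4(q^2)$, one sees $X$ has constant term $1-16=-15$, so $X^{3\cdot 2^m}$ has constant term $(-15)^{3\cdot 2^m}=15^{3\cdot 2^m}$, the exponent being even. Since $\haupt=\Delta(q^2)/\Delta(q)=q+\cdots$, the first term $\haupt^n X^{3\cdot 2^m}$ contributes $15^{3\cdot 2^m}q^n+\cdots$. It then remains to confirm that the second term of (\ref{hmsbetter}) vanishes to order strictly greater than $n$, so that it leaves the leading coefficient undisturbed. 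Setting $d=2^m-n\geq 1$, the Laurent series $\Delta(q)^{-d}=q^{-d}+\cdots$ has its lowest term at $q^{-d}$, and since $U_2$ retains even-index coefficients while halving the index, $\utwo{\Delta(q)^{-d}}$ has order at least $-\lfloor d/2\rfloor$; multiplying by $\Delta(q)^{2^{m+1}-n}=q^{2^m+d}+\cdots$ gives order at least $2^m+\lceil d/2\rceil>2^m>n$. Thus the leading coefficient of $\haupt_{n,m}$ is $15^{3\cdot 2^m}$, forcing $C_{n,m}=15^{3\cdot 2^m}$.

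I expect the sole delicate point to be the order estimate for the $U_2$-term: one must track how $U_2$, as described in Lemma \ref{Lemma: Upoperator}, shifts the valuation of the negative power $\Delta(q)^{-d}$ and then verify that, after multiplication by the positive power $\Delta(q)^{2^{m+1}-n}$, the result is genuinely of order exceeding $n$ (here the assumption $n<2^m$ is used). Everything else is routine bookkeeping with the Pochhammer symbols and the value $\psi_0=1$.
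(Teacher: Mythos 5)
Your proposal is correct and follows essentially the same route as the paper: divide (\ref{prehmns}) by $\Delta(q)^n$, then pin down $C_{n,m}$ by extracting the coefficient of $q^n$ from the Serre expression, where only the first term $\haupt^n X^{3\cdot 2^m}$ contributes and yields $(1-2^4)^{3\cdot 2^m}=15^{3\cdot 2^m}$. Your explicit order estimate for the $U_2$-term (using $n<2^m$) is in fact spelled out more carefully than in the paper, which simply asserts that the second term contributes nothing at $q^n$.
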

\begin{proof}
    Divide first both sides of (\ref{prehmns}) by $\Delta (q)^{n}$ to get
    \begin{align}
        \haupt_{n,m} = C_{n,m}\Delta (q)^{n} \left( \sum_{j=0}^{2^{m}-n} \frac{(3n- 3 \cdot 2^{m})_{3j}}{\psi_{j}} E_{4}(q)^{3 \cdot 2^{m} - 3n - 3j} \Delta (q)^{j} \right). \label{hnmseisensteineq1}
    \end{align}
    Note that this shows the $\haupt_{n,m}$ are indeed modular of level one. It remains to show that $C_{n,m} = 15^{3 \cdot 2^{m}}$. Recall from (\ref{hms}) that
    \begin{align}
        \haupt_{n,m} = \haupt^{n}\left( E_{4}(q)- 2^{4} E_{4}(q^{2}) \right)^{3 \cdot 2^{m}} + 2^{1-12n} \left. \left( \haupt^{-n} \left( E_{4}(q^{2})-E_{4}(q) \right)^{3 \cdot 2^{m}} \right) \right\rvert \hspace{2pt} U_{2}. \label{hmnseisensteineq2}
    \end{align}
    We equate the above two displays and extract the coefficient of $q^{n}$. Set $j=0$ in (\ref{hnmseisensteineq1}) to see that this coefficient is $C_{n,m}$, since $\Delta (q)^{n} = q^{n} + O(q^{n+1})$. In (\ref{hmnseisensteineq2}) only the first term provides a coefficient of $q^{n}$ since $\haupt^{n} = q^{n} + O(q^{n+1})$ also. Thus $C_{n,m} = (1-2^{4})^{3 \cdot 2^{m}} = 15^{3 \cdot 2^{m}}$ as desired.
\end{proof}
Expanding $\Delta (q) = 8000G_{4}(q)^{3} - 147G_{6}(q)^{2}$ and using the fact that $G_{4}(q) = 240 E_{4}(q)$, define
\begin{multline}
    \Psi_{n,m,i,j} = \binom{n+j}{i} \binom{3 \cdot 2^{m}-3n-2j}{j} \frac{3 \cdot 2^{m}-3n}{3 \cdot 2^{m}-3n-2j} \\ \cdot (-1)^{i+j} \cdot 2^{12 \cdot 2^{m} - 6n-6i+2j} \cdot 3^{6 \cdot 2^{m}-3n+i-3j} \cdot 5^{6 \cdot 2^{m}-3i} \cdot 7^{2i}  \label{bigpsicoeff}
\end{multline}
Thus from Proposition \ref{Prop: hmnseisenstein},
\begin{align*}
    \haupt_{n,m} &= \sum_{j=0}^{2^{m}-n} \sum_{i=0}^{n+j} \Psi_{n,m,i,j} G_{4}(q)^{3\cdot 2^{m}-3i} G_{6}(q)^{2i} \\
    &= \sum_{i=0}^{n} \sum_{j=0}^{2^{m}-n} \Psi_{n,m,i,j} G_{4}(q)^{3\cdot 2^{m}-3i} G_{6}(q)^{2i} + \sum_{i=n+1}^{2^{m} +1} \sum_{j=i-n}^{2^{m}-n} \Psi_{n,m,i,j} G_{4}(q)^{3 \cdot 2^{m}-3i} G_{6}(q)^{2i}.
\end{align*}
That is,
\begin{align*}
    \haupt_{n,m} = \sum_{i=0}^{2^{m} +1} c_{n,m,i} G_{4}(q)^{3 \cdot 2^{m}-3i} G_{6}(q)^{2i}
\end{align*}
where
\begin{align}
    c_{n,m,i} = \begin{cases} \sum_{j=0}^{2^{m}-n} \Psi_{n,m,i,j} & 0 \leq i \leq n \\ \sum_{j= i-n}^{2^{m}-n} \Psi_{n,m,i,j} & n+1 \leq i \leq a \cdot 2^{m} +1. \end{cases} \label{hmcoeffs}
\end{align}
Note that we can also just write $c_{n,m,i} = \sum_{j=0}^{2^{m}-n} \Psi_{n,m,i,j}$ for all $0 \leq i \leq 2^{m} +1$ due to the first binomial coefficient in (\ref{bigpsicoeff}). \par 

\section{Vertex Operator Algebras} \label{Section: VOAS}

\subsection{Pre-images} \label{Subsection: Pre-images}

Next we must make a choice of pre-images for our sequence terms $\haupt_{n,m}$. Let $r,s \in \mathbb{N}$ and define the square-bracket states
    \begin{align}
    \alpha_{r} = \frac{(-1)^{r}}{2^{r}(2r-1)!!}h[-2]^{2r}\textbf{1}, \hspace{10pt} \text{and} \hspace{10pt} \beta_{s} = \frac{2^{s}}{(2s-1)!!}h[-3]^{2s}\textbf{1} \label{alphabetastates}
    \end{align}
where $n!!$ denotes the product of all odd integers up to $n$ and where we set $(-1)!! = 0$. Though the character map $F_{\Heis}$ has a large kernel, the following states in $\Heis$ are natural to consider for the following reason: 

\begin{Prop}\label{Prop: charactermultiplicative}
    Denote by $\Sigma$ the infinite-dimensional vector subspace of $\Heis$ spanned by monomials of the form $\alpha_{r}\beta_{s}$ for $r,s \in \mathbb{N}$. Then, the normalized character map restricted to $\Sigma$ gives an isomorphism of vector spaces
    \begin{align*}
        &F_{\Heis}: \Sigma \to M \\
        &\sigma \mapsto F_{\Heis}(\sigma,q)
    \end{align*}
\end{Prop}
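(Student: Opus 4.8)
The plan is to compute the characters of the spanning monomials $\alpha_r\beta_s$ directly from the Mason--Tuite formula (\ref{MTeq}) and to show that $F_{\Heis}$ carries the natural basis of $\Sigma$ to the monomial basis of $M$. First I would note that, in the symmetric-algebra description of $\Heis$, the product $\alpha_r\beta_s$ is a nonzero scalar multiple of the single monomial state $h[-2]^{2r}h[-3]^{2s}\textbf{1}$, the scalar being the product of the normalizing constants in (\ref{alphabetastates}). Its character is then obtained by applying (\ref{MTeq}) with $\Phi$ the multiset consisting of $2r$ copies of $2$ and $2s$ copies of $3$ (the positions treated as labelled, so that $\mathcal{P}(\Phi,2)$ is the set of perfect matchings on $2r+2s$ labelled elements).

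The key structural point is that the sum over $\mathcal{P}(\Phi,2)$ collapses to its ``pure'' terms. Any partition containing a mixed pair $\{2,3\}$ contributes the factor $\tfrac{2(-1)^{3}}{(2-1)!(3-1)!}G_{5}(q) = -G_5(q)$, which vanishes since there are no nonzero odd-weight Eisenstein series of level one. Hence the only partitions that survive are those matching the $2r$ twos among themselves and the $2s$ threes among themselves; such partitions exist precisely because both multiplicities are even. I would then count that there are $(2r-1)!!$ matchings of the twos and $(2s-1)!!$ matchings of the threes, with each $\{2,2\}$ pair contributing $\tfrac{2(-1)^{3}}{(2-1)!(2-1)!}G_4(q) = -2G_4(q)$ and each $\{3,3\}$ pair contributing $\tfrac{2(-1)^{4}}{(3-1)!(3-1)!}G_6(q) = \tfrac12 G_6(q)$. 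Multiplying these contributions and restoring the normalizing constants from (\ref{alphabetastates}), the prefactors cancel exactly, yielding $F_{\Heis}(\alpha_r\beta_s) = G_4(q)^{r}G_6(q)^{s}$.

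With this computation in hand the proposition follows formally. The monomials $h[-2]^{2r}h[-3]^{2s}\textbf{1}$ belong to a PBW-type basis of the square-bracket Heisenberg algebra and are therefore linearly independent, so $\{\alpha_r\beta_s : r,s \in \mathbb{N}\}$ is a basis of $\Sigma$. On the target side, since $E_4$ and $E_6$ (equivalently $G_4$ and $G_6$) are algebraically independent, the products $\{G_4^{r}G_6^{s} : r,s \in \mathbb{N}\}$ form a vector-space basis of $M = \mathbb{C}[E_4,E_6]$. Thus $F_{\Heis}$ sends a basis of $\Sigma$ bijectively onto a basis of $M$, and so the restriction $F_{\Heis}\colon \Sigma \to M$ is an isomorphism of vector spaces.

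I expect the only genuine obstacle to be the bookkeeping in the Mason--Tuite sum: cleanly verifying that every partition using a mixed $\{2,3\}$ pair drops out via $G_5 = 0$, and that the surviving matchings number exactly $(2r-1)!!\,(2s-1)!!$. Once the character values $G_4^{r}G_6^{s}$ are established, the cancellation of the normalization constants and the passage from ``basis to basis'' to ``isomorphism'' are routine.
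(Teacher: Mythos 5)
Your proposal is correct and follows essentially the same route as the paper: compute $F_{\Heis}(\alpha_r\beta_s,q)=G_4(q)^rG_6(q)^s$ via the Mason--Tuite formula, using that mixed $\{2,3\}$ pairs vanish because $G_5=0$ and that the pure matchings number $(2r-1)!!\,(2s-1)!!$, after which the normalizing constants in (\ref{alphabetastates}) cancel. The concluding basis-to-basis argument is just a slightly more explicit packaging of the paper's injectivity-plus-surjectivity step.
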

\begin{proof}
    We show first that for all $r,s \geq \mathbb{N}$,
    \begin{align}
        F_{\Heis}(\alpha_{r} \beta_{s},q) = F_{\Heis}(\alpha_{r},q) \cdot F_{\Heis}(\beta_{s},q) = G_{4}(q)^{r} G_{6}(q)^{s}. \label{charactermultiplicativeeq1}
    \end{align}
    This is done using (\ref{MTeq}). Fix $r \in \mathbb{N}$ and consider $F_{\Heis}(h[-2]^{2r}\textbf{1},q)$. In this case, 
    \begin{align*}
        \Phi = \lbrace\underbrace{ 2,2, \ldots, 2 }_{\text{$2r$ times}}\rbrace
    \end{align*}
    and there are $(2r-1)!!$ ways of partitioning $\Phi$ into $r$ parts $\lbrace 2,2 \rbrace$. So,
    \begin{align*}
        F_{\Heis}(h[-2]^{2r}\textbf{1},q) = (2r-1)!! \left( \frac{2(-1)^{3}}{(2-1)!(2-1)!} G_{4}(q) \right)^{r} = (-1)^{r} \cdot 2^{r} (2r-1)!! G_{4}(q)^{r}
    \end{align*}
    which shows that $F_{\Heis}(\alpha_{r},q) = G_{4}(q)^{r}$. The proof of $F_{\Heis}(\beta_{s},q) = G_{6}(q)^{s}$ for $s \in \mathbb{N}$ is similar. For $r,s \in \mathbb{N}$, consider now $F_{\Heis}(h[-2]^{2r}h[-3]^{2s}\textbf{1},q)$. We now have that
    \begin{align*}
        \Phi = \lbrace \underbrace{2,2, \ldots ,2}_{\text{$2r$ times}}, \underbrace{3,3, \ldots ,3}_{\text{$2s$ times}} \rbrace.
    \end{align*}
    Notice that any partition of $\Phi$ above that involves a part of the form $\lbrace 2,3 \rbrace$ produces a factor of $G_{5}(q)=0$. Hence the only partitions yielding non-zero terms are those where the $2$'s and $3$'s are paired among themselves, and there are $(2r-1)!!(2s-1)!!$ such partitions. Thus
    \begin{align*}
        F_{\Heis}(h[-2]^{2r}h[-3]^{2s}\textbf{1},q) &= (2r-1)!!(2s-1)!! \left( \frac{2(-1)^{3}}{(2-1)!(2-1)!} G_{4}(q) \right)^{r} \left( \frac{2(-1)^{4}}{(3-1)!(3-1)!} G_{6}(q) \right)^{s} \\
        &= \frac{2^{r}(-1)^{r}(2r-1)!!(2s-1)!!}{2^{s}} G_{4}(q)^{r}G_{6}(q)^{s}
    \end{align*}
    which proves (\ref{charactermultiplicativeeq1}). From this, it is clear that $F_{\Heis}$ restricted to $\Sigma$ is injective. Let $f \in M_{k}$ for some $k \geq 4$ even. So we can write
    \begin{align*}
        f = \sum_{i=0}^{\lfloor k/12 \rfloor \pm 1 } c_{i} G_{4}(q)^{k/4-3i} G_{6}(q)^{2i}
    \end{align*}
    for some $c_{i} \in \mathbb{C}$. By letting
    \begin{align*}
        v = \sum_{i=0}^{\lfloor k/12 \rfloor \pm 1 } c_{i} \alpha_{k/4-3i} \beta_{2i} \in \Sigma,
    \end{align*}
    from the linearity of the character map we see that $F_{\Heis}(v,q) = f$, establishing that $F_{\Heis}$ gives the required isomorphism.
\end{proof}

Proposition \ref{Prop: charactermultiplicative} alongside the linearity of the character map show that for all $n,m \geq 1$ with $a=3$, the states
\begin{align}
    v_{n,m} := \sum_{i=0}^{2^{m} +1} c_{n,m,i} \alpha_{3\cdot 2^{m} - 3i} \beta_{2i}
\end{align}
where the coefficients $c_{n,m,i} \in \mathbb{Z}$ are given in (\ref{hmcoeffs}), satisfy $F_{\Heis}(v_{n,m},q) = \haupt_{n,m}$. \par 

\subsection{Convergence of Pre-Images} \label{Subsection: Convergence of Pre-Images}

The purpose of this subsection is to develop results necessary for the proof of Proposition \ref{Prop: overconvergent}, namely that for all $n,m \geq 1$, the pre-images $v_{n,m}$ satisfy $\nu_{2}(v_{n,m}) \geq -6n$. These results involve a close examination of the $2$-adic properties of the coefficients $c_{n,m,i}$. In contrast to \cite{BF24,FM23a,FM23}, we avoid obtaining explicit expressions for the $v_{n,m}$ in the round-bracket formalism. In fact, in Lemma \ref{Lemma: alphabetaval}, we prove a useful tool for assessing the valuation of the pre-images $\alpha_{r}\beta_{s}$ (cf. (\ref{alphabetastates})) for any $r,s \geq 2$. This will be used repeatedly towards the proof of Proposition \ref{Prop: overconvergent}.

\begin{Lemma} \label{Lemma: hmcoeffsvals}
    Let $n,m \geq 1$. When $0 \leq i \leq n$, we have
    \begin{align*}
        \nu_{2}(c_{n,m,i}) \geq 12 \cdot 2^{m} - 6n - 6i,
    \end{align*}
    and when $n+1 \leq i \leq 2^{m}-n$ we have
    \begin{align*}
        \nu_{2}(c_{n,m,i}) \geq 12 \cdot 2^{m}-8n-4i.
    \end{align*}
\end{Lemma}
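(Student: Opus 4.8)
The plan is to obtain the two lower bounds by estimating the $2$-adic valuation of the summands $\Psi_{n,m,i,j}$ defined in eq.~(\ref{bigpsicoeff}), since $c_{n,m,i}=\sum_{j}\Psi_{n,m,i,j}$ and the valuation of a sum is at least the minimum of the valuations of the terms. So I would first take $\nu_2$ of the closed form
\begin{align*}
    \Psi_{n,m,i,j} = \binom{n+j}{i} \binom{3 \cdot 2^{m}-3n-2j}{j} \frac{3 \cdot 2^{m}-3n}{3 \cdot 2^{m}-3n-2j} \cdot (-1)^{i+j} \cdot 2^{12 \cdot 2^{m} - 6n-6i+2j} \cdot 3^{6 \cdot 2^{m}-3n+i-3j} \cdot 5^{6 \cdot 2^{m}-3i} \cdot 7^{2i}.
\end{align*}
The powers of $3$, $5$, $7$ are $2$-adic units and contribute nothing, so $\nu_2(\Psi_{n,m,i,j}) = (12 \cdot 2^{m} - 6n - 6i + 2j) + \nu_2\!\binom{n+j}{i} + \nu_2\!\binom{3 \cdot 2^{m}-3n-2j}{j} + \nu_2\!\big(\tfrac{3\cdot 2^m-3n}{3\cdot 2^m - 3n - 2j}\big)$. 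The explicit term $2^{12\cdot 2^m - 6n - 6i + 2j}$ already supplies the bulk of the bound; the remaining task is to control the binomial and rational factors from below.

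For the first regime $0 \le i \le n$, the claimed bound is exactly $12 \cdot 2^{m}-6n-6i$, which is the explicit power of two at $j=0$. So here I would argue that the total valuation of every summand is at least this value. Since the explicit exponent $12\cdot 2^m - 6n - 6i + 2j$ already exceeds $12\cdot 2^m - 6n - 6i$ by $2j \ge 0$, it suffices to show that the combined valuation of the two binomials and the rational factor is at least $-2j$ — i.e. that these factors do not cancel more than $2j$ powers of two. The binomial coefficients are integers so have nonnegative valuation; the only source of negative valuation is the fraction $\tfrac{3\cdot 2^m - 3n}{3\cdot 2^m - 3n - 2j}$, whose valuation is $\nu_2(3\cdot 2^m - 3n) - \nu_2(3\cdot 2^m - 3n - 2j)$. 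Since $n < 2^m$, both numerator and denominator equal $3(2^m - n)$ minus a small even shift, and one checks $\nu_2(3\cdot 2^m - 3n) = \nu_2(n)$ for $m$ large relative to $\nu_2(n)$, with the denominator's valuation controlled similarly; the key is that $\nu_2(3\cdot 2^m - 3n - 2j) \le \nu_2(2j) \le \nu_2(j) + 1$ when the odd part forces a low power, giving $\nu_2(\text{fraction}) \ge -(\nu_2(j)+1) \ge -2j$ for all $j \ge 1$ (and $=0$ at $j=0$). Combining, each summand has valuation $\ge 12\cdot 2^m - 6n - 6i$, and the bound on $c_{n,m,i}$ follows.

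For the second regime $n+1 \le i \le 2^m - n$, where $j$ ranges from $i-n$, the same framework applies but the target $12\cdot 2^m - 8n - 4i$ is smaller, so the estimate has more slack: at the smallest index $j = i-n$ the explicit exponent is $12\cdot 2^m - 6n - 6i + 2(i-n) = 12\cdot 2^m - 8n - 4i$, matching the claim exactly, and for larger $j$ the exponent only grows by $2(j-(i-n))$. The plan is again to show the binomial-plus-fraction valuation exceeds the deficit. I expect the main obstacle to be precisely the analysis of the fraction $\tfrac{3\cdot 2^m-3n}{3\cdot 2^m-3n-2j}$ and the $2$-adic valuation of $\binom{n+j}{i}$ uniformly in $n,m,i,j$: one must verify the cancellation from the denominator never outpaces the explicit gain $2j$, and Kummer's theorem (counting carries in base $2$) is the natural tool for the binomial valuations. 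Once these elementary but fiddly estimates are in place — handling the edge cases $j=0$ and $i-n$ separately — both inequalities drop out, so I would isolate the fraction estimate as a short sub-lemma and treat the binomials via Kummer's theorem to keep the argument clean.
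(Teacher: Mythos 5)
Your overall strategy---bounding $\nu_{2}(c_{n,m,i})$ by the minimum over $j$ of $\nu_{2}(\Psi_{n,m,i,j})$, reading off the explicit power $2^{12\cdot 2^{m}-6n-6i+2j}$, discarding the odd prime powers, and using the vanishing of $\binom{n+j}{i}$ for $j<i-n$ to start the sum at $j=i-n$ in the second regime---is exactly the paper's. The gap is in your treatment of the rational factor. You claim $\nu_{2}\bigl(\tfrac{3\cdot 2^{m}-3n}{3\cdot 2^{m}-3n-2j}\bigr)\geq -(\nu_{2}(j)+1)\geq -2j$, resting on the assertion that $\nu_{2}(3\cdot 2^{m}-3n-2j)\leq\nu_{2}(2j)$. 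This is false: the denominator can be a high power of $2$ while the numerator is barely even. For $m=3$, $n=2$, $j=1$ (which satisfies $n<2^{m}$ and $0\leq j\leq 2^{m}-n$) the fraction is $18/16$ with valuation $-3<-2=-2j$, and more generally taking $3(2^{m}-n)=2^{k}+2$ with $k$ large makes the valuation $1-k$, arbitrarily more negative than $-2$. So the factor $2^{2j}$ cannot absorb the denominator on its own, and neither of your claimed bounds follows from the pieces you have assembled.

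The repair is to refuse to separate the fraction from the adjacent binomial. Writing $N=3\cdot 2^{m}-3n$, one has
\[
\frac{N}{N-2j}\binom{N-2j}{j}=\binom{N-2j}{j}+2\binom{N-2j-1}{j-1},
\]
an integer; in the counterexample above $\binom{16}{1}=16$ supplies precisely the valuation the fraction destroys. Hence every summand satisfies $\nu_{2}(\Psi_{n,m,i,j})\geq 12\cdot 2^{m}-6n-6i+2j$, which gives the first bound since $j\geq 0$, and the second since $j\geq i-n$ there. Kummer's theorem is not needed; integrality of $\binom{n+j}{i}$ and of the combined product above suffices. This integrality observation is the single missing ingredient: with it, the rest of your argument goes through and coincides with the proof in the paper (which groups the factors exactly this way, asserting that each term of the inner sum has non-negative valuation).
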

\begin{proof}
From (\ref{bigpsicoeff}),
    \begin{multline*}
        \nu_{2} \left( \sum_{j=0}^{2^{m}-n} \Psi_{n,m,i,j} \right) = \nu_{2} \left( \sum_{j=0}^{2^{m}-n} \binom{n+j}{i} \binom{3 \cdot 2^{m}-3n-2j}{j} \frac{3\cdot 2^{m}-3n}{3 \cdot 2^{m}-3n-2j} \cdot (-1)^{j} \cdot 2^{2j} \cdot 3^{-3j} \right) \\ + 12 \cdot 2^{m}-6n-6i
    \end{multline*}
    Suppose $0 \leq i \leq n$. Since each term in the sum above has non-negative valuation, we can bound
    \begin{align*}
        \nu_{2}(c_{n,m,i}) \geq 12 \cdot 2^{m} - 6n - 6i.
    \end{align*}
    Likewise when $n+1 \leq i \leq 2^{m}+1$, we have $i-n \leq j \leq 2^{m}-n$. So it is not hard to see that we can write
    \begin{align*}
        \nu_{2}(c_{n,m,i}) \geq 12 \cdot 2^{m}-8n-4i.
    \end{align*}
\end{proof}

\begin{Lemma} \label{Lemma: alphabetaval}
    Let $r,s$ be integers with $r,s \geq 2$. Then
    \begin{align*}
        \nu_{2} \left( \alpha_{r} \beta_{s} \right) = -4r-3s.
    \end{align*}
\end{Lemma}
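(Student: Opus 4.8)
The plan is to reduce the whole statement to one vacuum-expectation (constant-term) computation. Writing $\alpha_r\beta_s = c_{r,s}\,h[-2]^{2r}h[-3]^{2s}\mathbf{1}$ with $c_{r,s} = (-1)^r 2^s/\bigl(2^r(2r-1)!!\,(2s-1)!!\bigr)$, and noting that the double factorials are odd so that $\nu_2(c_{r,s}) = s-r$, it suffices to show that the round-bracket expansion of $h[-2]^{2r}h[-3]^{2s}\mathbf{1}$ has minimal coefficient-valuation exactly $-3r-4s$; adding $s-r$ then gives $-4r-3s$. Since $\nu_2$ of a state is the infimum of the $2$-adic valuations of its round-bracket coefficients, I must both (i) exhibit a single coefficient of valuation $-3r-4s$ and (ii) bound every coefficient from below by $-3r-4s$.

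First I would expand each factor via (\ref{squareh2exp}) and (\ref{squareh3exp}) and normal-order using $[h(m),h(n)] = m\,\delta_{m+n,0}$, so that by Wick's theorem every round-bracket coefficient is a sum of products of two-point contractions times the surviving creation operators. Only three contraction values occur, and I would read them off the displayed coefficients: writing $\langle XY\rangle$ for the coefficient of $\mathbf{1}$ in $XY\mathbf{1}$, one finds $\langle h[-2]\,h[-2]\rangle = -\tfrac1{120}$ of valuation $-3$, the cross term $\langle h[-2]\,h[-3]\rangle = 0$, and $\langle h[-3]\,h[-3]\rangle = \tfrac1{480}-\tfrac1{160}+\tfrac1{315} = -\tfrac1{240}+\tfrac1{315}$ of valuation $-4$. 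The key obstacle, and the reason the estimate is sharp, is the pair of cancellations hidden here: the cross contraction vanishes identically, and the two leading contributions to $\langle h[-3]\,h[-3]\rangle$ each have naive valuation $-5$ but cancel to valuation $-4$. These are precisely the facts that force the final answer to be an exact equality rather than a one-sided bound.

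For (i), the constant term is the fully contracted part, and since cross contractions vanish, Wick's theorem factorizes it as $\langle h[-2]^{2r}\rangle\,\langle h[-3]^{2s}\rangle = (2r-1)!!\,(-\tfrac1{120})^{r}\cdot(2s-1)!!\,\langle h[-3]\,h[-3]\rangle^{s}$; every factor has an exact valuation and the double factorials are odd, so this coefficient has valuation exactly $-3r-4s$. For (ii), I would run a counting argument over a general Wick term: a monomial with $l_2$ surviving operators drawn from the $h[-2]$-factors and $l_3$ from the $h[-3]$-factors uses $(2r-l_2)/2$ internal $h[-2]$-contractions and $(2s-l_3)/2$ internal $h[-3]$-contractions, cross contractions contributing nothing. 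Using that surviving creation operators have coefficient-valuation $\geq 0$, except $h(-1)$ arising from $h[-3]$ which has valuation $-1$, one obtains the lower bound $\tfrac32 l_2 + l_3 - 3r - 4s \geq -3r-4s$, minimized at $l_2=l_3=0$. Hence the constant term realizes the minimum, so $\nu_2\!\bigl(h[-2]^{2r}h[-3]^{2s}\mathbf{1}\bigr) = -3r-4s$ and therefore $\nu_2(\alpha_r\beta_s) = -4r-3s$. The hypothesis $r,s\geq 2$ only ensures that all relevant factors and surviving operators genuinely occur; the computation is otherwise uniform.
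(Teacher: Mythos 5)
Your proof is correct and follows essentially the same route as the paper's: expand $h[-2]$ and $h[-3]$ in the round-bracket (commuting-operator) form via (\ref{squareh2exp})--(\ref{squareh3exp}) and track the $2$-adic valuations of the resulting coefficients of $h[-2]^{2r}h[-3]^{2s}\mathbf{1}$. The only difference is one of completeness: the paper asserts the valuation ``since $\nu_{2}(120)=3$ and $\nu_{2}(240)=4$,'' whereas you actually verify the two facts that make the equality true --- the vanishing of the cross-contraction $\langle h[-2]\,h[-3]\rangle$ and the cancellation $\tfrac{1}{480}-\tfrac{1}{160}=-\tfrac{1}{240}$ raising $\nu_{2}\langle h[-3]\,h[-3]\rangle$ from the naive $-5$ to $-4$ --- and you supply the Wick-type lower bound showing the fully contracted constant term realizes the minimum, so your write-up is the more rigorous version of the paper's argument.
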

\begin{proof}
    We write (\ref{squareh2exp}) and (\ref{squareh3exp}) as the following commuting operators:
    \begin{align*}
        X &= h(-2) + h(-1) - \frac{1}{120}\partial_{h(-2)} + \frac{1}{120} \partial_{h(-3)} \\
        Y &= h(-3) + \frac{3}{2}h(-2) + \frac{1}{2}h(-1) + \frac{1}{240} \partial_{h(-1)} - \frac{1}{240}\partial_{h(-2)} + \frac{1}{315} \partial_{h(-3)}.
    \end{align*}
    The fact that these commute is direct from the property that $h[-3]$ and $h[-2]$ commute. We see that for $r,s \in \mathbb{N}$, $\nu_{2}(X^{r}Y^{s}\textbf{1}) = -3r-4s$ since $\nu_{2}(120)=3$ and $\nu_{2}(240)=4$. So from the definitions of $\alpha_{r}$ and $\beta_{s}$ in (\ref{alphabetastates}), we have $\nu_{2}(\alpha_{r}\beta_{s}) = -4r-3s$ as desired.
\end{proof}
\noindent We now establish that the sequence $\lbrace v_{n,m} \rbrace_{m \geq 1}$ for fixed $n \geq 1$ is Cauchy. Write
\begin{align}
    \nu_{2}&(v_{n,m+1} - v_{n,m}) \nonumber \\
    &= \nu_{2}\left( \sum_{i=0}^{2^{m+1}} c_{n,m+1,i}\alpha_{3 \cdot 2^{m+1}-3i}\beta_{2i} - \sum_{j=0}^{2^{m}} c_{n,m,j} \alpha_{3\cdot 2^{m}-3j}\beta_{2j} \right) \nonumber \\
    &\geq \min \left\lbrace \nu_{2}\left( \sum_{i=0}^{2^{m}} c_{n,m+1,i}\alpha_{3\cdot 2^{m+1}-3i} \beta_{2i} - c_{n,m,i}\alpha_{3\cdot 2^{m}-3i}\beta_{2i} \right), \nu_{2}\left(\sum_{j=2^{m}+1}^{2^{m+1}} c_{n,m+1,j}\alpha_{3\cdot 2^{m+1}-3j}\beta_{2j} \right) \right\rbrace. \label{cauchytwosums}
\end{align}
Note that $\nu_{2}(\alpha_{3\cdot 2^{m+1}-3j}\beta_{2j}) = -12\cdot 2^{m+1}+6j$ from Lemma \ref{Lemma: alphabetaval}. Since we have assumed $n < 2^{m}$, from Lemma \ref{Lemma: hmcoeffsvals}, whenever $2^{m}+1 \leq j \leq 2^{m+1}$ we get
\begin{align*}
    \nu_{2}(c_{n,m+1,j}\alpha_{3\cdot 2^{m+1}-3j}\beta_{2j}) \geq 2 j -8n.
\end{align*}
Thus the second term in (\ref{cauchytwosums}) tends to infinity as $m \to \infty$. We can write
\begin{align*}
    \alpha_{3\cdot 2^{m+1}-3i} = \frac{(6\cdot 2^{m}-6i-1)!!(6\cdot 2^{m}-1)!!}{(6\cdot 2^{m+1}-6i-1)!!} \alpha_{3 \cdot 2^{m}} \alpha_{3 \cdot 2^{m}-3i}
\end{align*}
where the coefficient above has $2$-adic valuation of zero. With this, the first term in (\ref{cauchytwosums}) becomes
\begin{align*}
    \nu_{2}\left( \sum_{i=0}^{2^{m}} \alpha_{3 \cdot 2^{m}-3i}\beta_{2i} \left( c_{n,m+1,i} \alpha_{3 \cdot 2^{m}} - c_{n,m,i} \right) \right).
\end{align*}
Using Lemma \ref{Lemma: alphabetaval} once again, it remains to prove that 
\begin{align*}
    \nu_{2}\left( c_{n,m+1,i} \alpha_{3 \cdot 2^{m}} - c_{n,m,i} \right) - 12\cdot 2^{m}+6i \to \infty
\end{align*}
for all $0 \leq i \leq 2^{m}$ as $m \to \infty$. We require a small lemma first: \par 
\begin{Lemma} \label{Lemma: alphaformula}
    Let $r \geq 1$. Then
    \begin{align*}
        \alpha_{r} = \frac{(-1)^{r}}{2^{r}(2r-1)!!} \sum_{l=0}^{r} \binom{2r}{2l} \frac{(2l)!}{(-240)^{l}l!}\left( h(-2)+h(-1) \right)^{2(r-l)}\textbf{1}.
    \end{align*}
\end{Lemma}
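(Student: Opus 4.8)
The plan is to start from the definition of $\alpha_{r}$ in (\ref{alphabetastates}), namely $\alpha_{r} = \frac{(-1)^{r}}{2^{r}(2r-1)!!} h[-2]^{2r}\textbf{1}$, and reduce everything to understanding the single power $h[-2]^{2r}\textbf{1}$. Looking at the expansion (\ref{squareh2exp}), the operator $h[-2]$ acts on the vacuum $\textbf{1}$ as $h(-2)+h(-1)$ (the annihilation terms $h(2), h(3), \ldots$ kill $\textbf{1}$), but when applying $h[-2]$ repeatedly the annihilation part no longer vanishes, so the combinatorics of contractions must be tracked. The cleanest route is to work with the two commuting operators from the proof of Lemma \ref{Lemma: alphabetaval}: writing $A = h(-2)+h(-1)$ for the creation part and recognizing that the relevant annihilation contribution comes from the $-\frac{1}{120}\partial_{h(-2)}+\frac{1}{120}\partial_{h(-3)}$ piece of $X$. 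First I would verify that, modulo terms that do not survive on the target space spanned by powers of $(h(-2)+h(-1))$, the operator $h[-2]$ behaves like a creation-annihilation pair whose Wick contractions produce a factor of $-\frac{1}{240}$ per contracted pair (the $\frac{1}{240}$ coming from pairing the $h(2)$-mode against $h(-2)$, matching the $\nu_2(240)=4$ bookkeeping already used).

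The key step is then a Wick-type / Hermite-polynomial identity: $h[-2]^{2r}\textbf{1}$ expands as a sum over the number $l$ of internal contractions, giving
\begin{align*}
    h[-2]^{2r}\textbf{1} = \sum_{l=0}^{r} \binom{2r}{2l}(2l-1)!!\,\left(-\tfrac{1}{240}\right)^{l} (h(-2)+h(-1))^{2(r-l)}\textbf{1},
\end{align*}
where $\binom{2r}{2l}$ chooses which $2l$ of the $2r$ factors of $h[-2]$ participate in contractions, $(2l-1)!!$ counts the ways to pair them up, and each pair contributes the contraction constant $-\frac{1}{240}$. Using $(2l-1)!! = \frac{(2l)!}{2^{l} l!}$ converts $(2l-1)!!\left(-\frac{1}{240}\right)^{l}$ into $\frac{(2l)!}{2^{l}l!}\cdot\frac{(-1)^{l}}{240^{l}} = \frac{(2l)!}{l!}\cdot\frac{1}{(-480)^{l}}$; I would then reconcile this against the stated coefficient $\frac{(2l)!}{(-240)^{l}l!}$, checking carefully whether the contraction constant is $-\frac{1}{240}$ or $-\frac{1}{120}$, since both constants appear in $X$ and the correct one is fixed by which annihilation mode pairs nontrivially against $(h(-2)+h(-1))^{2}$. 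Multiplying through by the normalizing prefactor $\frac{(-1)^{r}}{2^{r}(2r-1)!!}$ then yields the claimed formula after the routine simplification.

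The main obstacle I anticipate is pinning down the exact contraction constant and verifying there are no cross-terms: the expansion (\ref{squareh2exp}) contains both an $h(2)$-mode and an $h(3)$-mode on the annihilation side, and one must confirm that only the pairing producing $\frac{1}{240}$ survives when contracting against creation operators $h(-2)+h(-1)$, with the $h(3)$ and $h(-3)$ contributions either vanishing on this subspace or being absorbed. Concretely I would induct on $r$, peeling off one factor of $h[-2]$ and using $[h(2),h(-2)]=2$ together with $h(n)\textbf{1}=0$ for $n\geq 1$, to establish the recursion that the coefficients in the claimed sum satisfy; this sidesteps having to justify a general Wick formula and reduces the whole lemma to matching a one-step recurrence against the binomial-coefficient identity $\binom{2r+2}{2l} = \binom{2r}{2l}+2\binom{2r}{2l-1}+\binom{2r}{2l-2}$, which is elementary. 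The genuinely delicate point is therefore purely the determination of the single scalar contraction weight, after which the combinatorial identity is forced.
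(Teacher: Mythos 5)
Your approach is essentially the paper's: the paper also reduces to $h[-2]^{2r}\mathbf{1}$, observes that on the polynomial subalgebra in $h(-1),h(-2)$ the operator $h[-2]$ acts as $W=x-\gamma\partial_x$ with $x=h(-2)+h(-1)$, and then quotes the Rodrigues formula for generalized Hermite polynomials, $(x-\gamma\partial_x)^{2r}\cdot 1=\sum_{l}\binom{2r}{2l}\frac{(-\gamma)^l(2l)!}{2^l l!}x^{2r-2l}$, which is exactly your Wick expansion; your inductive verification via the Pascal-type identity is just a hands-on proof of that same formula. The one point you left open resolves as follows: the contraction constant is $-\tfrac{1}{120}$, not $-\tfrac{1}{240}$. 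Indeed, the $-\tfrac{1}{240}h(2)$ term in (\ref{squareh2exp}) is the operator $-\tfrac{1}{240}\cdot 2\,\partial_{h(-2)}=-\tfrac{1}{120}\partial_{h(-2)}$ (since $h(2)$ acts as $2\partial_{h(-2)}$, equivalently $[h(2),h(-2)]=2$), and $\partial_{h(-2)}$ applied to $h(-2)+h(-1)$ gives $1$; the $h(3)$ and higher annihilation modes kill every polynomial in $h(-1),h(-2)$, so there are no cross-terms, as you suspected. With $\gamma=\tfrac{1}{120}$ one gets $(2l-1)!!\left(-\tfrac{1}{120}\right)^{l}=\frac{(2l)!}{2^{l}l!}\cdot\frac{(-1)^{l}}{120^{l}}=\frac{(2l)!}{(-240)^{l}l!}$, which is precisely the stated coefficient, and the prefactor $\frac{(-1)^{r}}{2^{r}(2r-1)!!}$ finishes the proof. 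Your provisional displayed formula with $-\tfrac{1}{240}$ per pair would give $(-480)^{l}$ and is the wrong branch of the alternative you flagged; once corrected, the argument is complete.
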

\begin{proof}
    We obtain a formula for $h[-2]^{2r}\textbf{1}$. It was shown in Section 3.2 of \cite{BF24} that the Rodrigue's formula for the generalized Hermite polynomials
    \begin{align}
        \text{He}_{r}^{\alpha}(x) = (x- \gamma \partial_{x})^{r} \cdot 1 = \sum_{l \geq 0} \binom{r}{2l}\frac{(-\gamma)^{l}(2l)!}{2^{l}(l)!}x^{r-2l} \label{hermiteformula}
    \end{align}
    with $r \geq 0$ and $\gamma > 0$ can be used to prove that
    \begin{align*}
        h[-1]^{r}\textbf{1} = \sum_{l \geq 0} \binom{r}{2l} \frac{(2l)!}{l!(-24)^{l}} h(-1)^{r-2l}\textbf{1}
    \end{align*}
    by setting $x = h(-1)$ and $\gamma = 1/12$. By setting
    \begin{align*}
        W = h(-2)+h(-1)-\frac{1}{120}\partial_{h(-2)}
    \end{align*}
    (cf. Lemma \ref{Lemma: alphabetaval}), we see that $h[-2]^{2r}\textbf{1} = W^{2r}\textbf{1}$. So by letting $\gamma = 1/120$ and $x=h(-2)+h(-1)$ in (\ref{hermiteformula}) and using the definition of $\alpha_{2r}$ in (\ref{alphabetastates}) we get the desired expression.
\end{proof}
From above, as $0 \leq l \leq r$, the valuation of the coefficient of $\left( h(-2)+h(-1) \right)^{2(r-l)}$ is decreasing, and reaches a minimum at the constant term when $l=r$ (cf. Lemma \ref{Lemma: alphabetaval}). In particular, this shows that
\begin{multline*}
    \nu_{2}\left( c_{n,m+1,i} \alpha_{3 \cdot 2^{m}} - c_{n,m,i} \right) \\ = \min \left\lbrace \nu_{2}\left( c_{n,m+1,i} \text{Coeff}_{h(-1)^{2}\textbf{1}} \alpha_{3 \cdot 2^{m}} \right),   \nu_{2} \left( c_{n,m+1,i} \left( \text{Coeff}_{\textbf{1}} \alpha_{3 \cdot 2^{m}} \right) - c_{n,m,i} \right) \right\rbrace.
\end{multline*}
\begin{Prop} 
    Fix $n \geq 1$. Then for all $0 \leq i \leq 2^{m}+1$, and as $m \to \infty$ we have
    \begin{align*}
        \nu_{2}\left( c_{n,m+1,i} \alpha_{3 \cdot 2^{m}} - c_{n,m,i} \right) -12 \cdot 2^{m}+6i \to \infty 
    \end{align*}
\end{Prop}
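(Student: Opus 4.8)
The plan is to invoke the decomposition displayed immediately before the statement, which already reduces $\nu_{2}(c_{n,m+1,i}\alpha_{3\cdot2^m}-c_{n,m,i})$ to the minimum of the two quantities $\nu_{2}\bigl(c_{n,m+1,i}\,\text{Coeff}_{h(-1)^2\textbf{1}}\,\alpha_{3\cdot2^m}\bigr)$ and $\nu_{2}\bigl(c_{n,m+1,i}\,\text{Coeff}_{\textbf{1}}\,\alpha_{3\cdot2^m}-c_{n,m,i}\bigr)$. It therefore suffices to subtract $12\cdot2^m-6i$ from each and show that both tend to $\infty$, and moreover that they do so uniformly in $i$, since a uniform bound is what the surrounding Cauchy estimate requires. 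The two round-bracket coefficients of $\alpha_{3\cdot2^m}$ are extracted from Lemma~\ref{Lemma: alphaformula}: the $l=r$ summand gives $\text{Coeff}_{\textbf{1}}\alpha_r=240^{-r}$ and the $l=r-1$ summand gives $\text{Coeff}_{h(-1)^2\textbf{1}}\alpha_r=(-1)^r r/\bigl(2(-240)^{r-1}\bigr)$; at $r=3\cdot2^m$ these have valuations $-12\cdot2^m$ and $m+3-12\cdot2^m$ respectively.

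For the first quantity I would substitute $\nu_{2}\bigl(\text{Coeff}_{h(-1)^2\textbf{1}}\alpha_{3\cdot2^m}\bigr)=m+3-12\cdot2^m$ and apply Lemma~\ref{Lemma: hmcoeffsvals} at level $m+1$. For $0\le i\le n$ this yields the lower bound $m+3-6n$, while for $i>n$ it yields $m+3-8n+2i\ge m+5-6n$; both are independent of $i$ and diverge, so this part is routine.

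The real work is the second quantity, where $\text{Coeff}_{\textbf{1}}\alpha_{3\cdot2^m}=240^{-3\cdot2^m}$ and a crude bound on $c_{n,m+1,i}240^{-3\cdot2^m}-c_{n,m,i}$ only gives boundedness; genuine cancellation must be exhibited. I would work termwise in $c_{n,m,i}=\sum_j\Psi_{n,m,i,j}$ via (\ref{bigpsicoeff}). The key observation is that multiplying $\Psi_{n,m+1,i,j}$ by $240^{-3\cdot2^m}=2^{-12\cdot2^m}3^{-3\cdot2^m}5^{-3\cdot2^m}$ equalizes its power of $2$ with that of $\Psi_{n,m,i,j}$, namely $2^{12\cdot2^m-6n-6i+2j}$, so that after factoring out this common prefactor the two terms differ only through the odd units $3^{3\cdot2^m}5^{3\cdot2^m}$ and through the value of $\phi(x)=\binom{x-2j}{j}\tfrac{x}{x-2j}$ at $x_1=3\cdot2^m-3n$ versus $x_2=3\cdot2^{m+1}-3n$. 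By Lifting-the-Exponent one has $3^{3\cdot2^m}\equiv5^{3\cdot2^m}\equiv1\pmod{2^{m+2}}$, and since $j!\,\phi\in\mathbb{Z}[x]$ with $x_2-x_1=3\cdot2^m$ one gets $\nu_{2}\bigl(\phi(x_2)-\phi(x_1)\bigr)\ge m-\nu_{2}(j!)$; combining these shows the $j$-th contribution to $c_{n,m+1,i}240^{-3\cdot2^m}-c_{n,m,i}$ exceeds $12\cdot2^m-6i$ by at least $-6n+2j+\bigl(m-\nu_{2}(j!)\bigr)$.

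The apparent difficulty is that $\nu_{2}(j!)$ grows like $j$ and could cancel the gain $m$, and resolving this is the main obstacle. It is defeated by the prefactor: writing $\nu_{2}(j!)=j-s_{2}(j)$ for the binary digit sum $s_{2}(j)$ gives $2j-\nu_{2}(j!)=j+s_{2}(j)\ge j+1$, so every term exceeds $12\cdot2^m-6i$ by at least $m+2-6n$, uniformly in $i$ and $j$. The indices $2^m-n<j\le2^{m+1}-n$ that occur only in $c_{n,m+1,i}$ are handled separately by their prefactor alone, which forces an excess of at least $2\cdot2^m-8n+2$. Hence the second quantity is bounded below by a quantity diverging uniformly in $i$, and taking the minimum with the first completes the proof. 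Beyond this balancing step, the only delicate points are verifying that the binomial-fraction factors of (\ref{bigpsicoeff}) are $2$-adic integers, so that the prefactor extraction is legitimate, and correctly isolating $3^{3\cdot2^m},5^{3\cdot2^m}\to1$ together with the polynomial congruence for $\phi$ as the precise sources of cancellation.
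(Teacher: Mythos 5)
Your proof is correct and shares the paper's overall skeleton: the same reduction of $\nu_{2}\left( c_{n,m+1,i}\alpha_{3\cdot 2^{m}}-c_{n,m,i}\right)$ to the minimum of $\nu_{2}\left( c_{n,m+1,i}\,\text{Coeff}_{h(-1)^{2}\mathbf{1}}\,\alpha_{3\cdot 2^{m}}\right)$ and $\nu_{2}\left( c_{n,m+1,i}\,\text{Coeff}_{\mathbf{1}}\,\alpha_{3\cdot 2^{m}}-c_{n,m,i}\right)$, the same two coefficients read off from Lemma \ref{Lemma: alphaformula} (your valuations $-12\cdot 2^{m}$ and $m+3-12\cdot 2^{m}$ match the paper's), and the same disposal of the first quantity via Lemma \ref{Lemma: hmcoeffsvals}. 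Where you genuinely diverge is in the constant-term difference. The paper factors out $2^{12\cdot 2^{m}-6n-6i}$ and then simply asserts that the minimal valuation among the terms of the resulting sum is attained at $j=0$ (resp.\ $j=i-n$), reducing everything to $\nu_{2}(15^{3\cdot 2^{m}}-1)=m+4$ for that single term. You instead bound \emph{every} term uniformly, by isolating the common factor $2^{2j}$, splitting the cancellation into $15^{3\cdot 2^{m}}\equiv 1$ plus the congruence $j!\,\phi(x_{2})\equiv j!\,\phi(x_{1})\pmod{3\cdot 2^{m}}$ for the integer-coefficient polynomial $j!\,\phi$, and absorbing $\nu_{2}(j!)=j-s_{2}(j)$ into the prefactor via $2j-\nu_{2}(j!)=j+s_{2}(j)$. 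This buys real robustness: the paper's localization of the minimum at $j=0$ is in fact questionable --- for instance at $i=0$ with $n$ odd, the $j=1$ term is a unit times $2^{2}\left( (15^{3\cdot 2^{m}}-1)(6\cdot 2^{m}-3n)+3\cdot 2^{m}\right)$, of valuation exactly $m+2$, undercutting the $j=0$ term's $m+4$ --- whereas your termwise estimate never needs to identify the minimizer and still delivers the divergent uniform lower bound $m+2-6n$. The two loose ends you flag are both easily closed: integrality of $\binom{x-2j}{j}\tfrac{x}{x-2j}$ follows from the identity $\binom{x-2j}{j}\tfrac{x}{x-2j}=\binom{x-2j}{j}+2\binom{x-2j-1}{j-1}$, and at $j=0$, where the polynomial congruence is vacuous, the factor $15^{3\cdot 2^{m}}-1$ alone supplies valuation $m+4$. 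So the argument is complete, and arguably tighter than the paper's at its one delicate step.
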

\begin{proof}
    From Lemma \ref{Lemma: alphaformula},
    \begin{align*}
        \text{Coeff}_{\textbf{1}} \alpha_{3 \cdot 2^{m}} &= \frac{(6 \cdot 2^{m})!}{(480)^{3 \cdot 2^{m}} (3 \cdot 2^{m})! (6 \cdot 2^{m}-1)!!} \\
        &= 2^{-12\cdot 2^{m}} \cdot 3^{-3 \cdot 2^{m}} \cdot 5^{-3 \cdot 2^{m}}, \\
        \intertext{and}
        \text{Coeff}_{h(-1)^{2}\textbf{1}} \alpha_{3 \cdot 2^{m}} &= -45\cdot 2^{m+3} \left( \frac{(6 \cdot 2^{m})!}{(480)^{3 \cdot 2^{m}} (3 \cdot 2^{m})! (6 \cdot 2^{m}-1)!!} \right) \\
        &= -2^{-12 \cdot 2^{m} + m + 3} \cdot 3^{-3 \cdot 2^{m}+2} \cdot 5^{-3 \cdot 2^{m} +1}.
    \end{align*}
    Suppose that $0 \leq i \leq n$. It is not difficult to compute using Lemma \ref{Lemma: hmcoeffsvals} that
    \begin{align}
        \nu_{2}\left( c_{n,m+1,i} \text{Coeff}_{h(-1)^{2}\textbf{1}} \alpha_{3 \cdot 2^{m}} \right) \geq 12 \cdot 2^{m} - 6n - 6i + m + 3. \label{propineq1}
    \end{align}
    Likewise, when $n+1 \leq i \leq 2^{m}+1$, we get
     \begin{align}
        \nu_{2}\left( c_{n,m+1,i} \text{Coeff}_{h(-1)^{2}\textbf{1}} \alpha_{3 \cdot 2^{m}} \right) \geq 12 \cdot 2^{m} - 8n - 4i + m + 3. \label{propineq2}
    \end{align}
    Now for the constant terms:
    \begin{align*}
         c_{n,m+1,i} \text{Coeff}_{\textbf{1}} \alpha_{3 \cdot 2^{m}} - c_{n,m,i} &= 2^{-12\cdot 2^{m}} \cdot 3^{-3 \cdot 2^{m}} \cdot 5^{-3 \cdot 2^{m}} \sum_{j=0}^{2^{m+1}-n} \Psi_{n,m+1,i,j} - \sum_{j=0}^{2^{m}-n}\Psi_{n,m,i,j}.
    \end{align*}
   which have valuation equal to
   \begin{multline*}
       \nu_{2} \left( 3^{3 \cdot 2^{m}} \cdot 5^{3 \cdot 2^{m}} \sum_{j=0}^{2^{m+1}-n} \binom{n+j}{i} \binom{6 \cdot 2^{m}-3n-2j}{j} \frac{6 \cdot 2^{m}-3n}{6 \cdot 2^{m}-3n-2j} (-1)^{j} \cdot 2^{2j} \cdot 3^{-3j} \right. \\
       \left. - \sum_{j=0}^{2^{m}-n} \binom{n+j}{i} \binom{3 \cdot 2^{m}-3n-2j}{j} \frac{3 \cdot 2^{m}-3n}{3 \cdot 2^{m}-3n-2j} (-1)^{j} \cdot 2^{2j} \cdot 3^{-3j} \right) + 12 \cdot 2^{m}-6n-6i.
   \end{multline*}
    Consider the valuation of the difference of sums above. When $0 \leq i \leq n$, the minimum is reached when $j=0$ so that
    \begin{align}
        \nu_{2} \left( c_{n,m+1,i} \text{Coeff}_{\textbf{1}} \alpha_{3 \cdot 2^{m}} - c_{n,m,i}  \right) &= 12 \cdot 2^{m}-6n-6i + \nu_{2} \left( \binom{n}{i} \left( (1-2^{4})^{3 \cdot 2^{m}} -1 \right) \right) \nonumber \\
        &= 12 \cdot 2^{m}-6n-6i + m+4 + \nu_{2} \left( \binom{n}{i} \right). \label{propineq3}
    \end{align}
    Likewise when $n+1 \leq i \leq 2^{m}+1$, the minimum valuation is attained when $j=i-n$:
    \begin{multline*}
        \nu_{2} \left( c_{n,m+1,i} \text{Coeff}_{\textbf{1}} \alpha_{3 \cdot 2^{m}} - c_{n,m,i}  \right) = 12 \cdot 2^{m}-8n-4i \\ + \nu_{2} \left( \binom{6 \cdot 2^{m} - n -2i}{i-n} \frac{6 \cdot 2^{m}-3n}{6 \cdot 2^{m}-n-2i} - \binom{3 \cdot 2^{m}-n-2i}{i-n} \frac{3 \cdot 2^{m}-3n}{3 \cdot 2^{m}-n-2i} \right)
    \end{multline*}
    so that
    \begin{align}
        \nu_{2} \left( c_{n,m+1,i} \text{Coeff}_{\textbf{1}} \alpha_{3 \cdot 2^{m}} - c_{n,m,i}  \right) &\geq 12 \cdot 2^{m}-8n-4i + m+2-2i+2n \nonumber \\
        &= 12 \cdot 2^{m}-6n-6i+m+2. \label{propineq4}
    \end{align}
  Looking at all four cases (\ref{propineq1}), (\ref{propineq2}), (\ref{propineq3}) and (\ref{propineq4}), we see that the claim in the proposition holds.
\end{proof}

\subsection{Proof of Theorems \ref{Th: TheoremImage} and \ref{Th: TheoremImage2}} \label{Subsection: OverconvergentImage}

We showed in the previous subsection that for all $n \geq 1$, the limits $v_{n} = \lim_{m \to \infty} v_{n,m}$ are elements of the $2$-adic Heisenberg VOA $\pHeis$. Thus the space $\mathbb{Q}( \haupt )\subset \twomodforms$ is in the image of the character map on $\pHeis$ which proves Theorem \ref{Th: TheoremImage2}.

\begin{Prop} \label{Prop: overconvergent}
    Let $n \geq 1$. Then $\nu_{2}\left( v_{n,m} \right) \geq -6n$.
\end{Prop}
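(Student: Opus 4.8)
The plan is to estimate $\nu_{2}(v_{n,m})$ summand-by-summand using the non-archimedean inequality $\nu_{2}\!\left(\sum_{i} x_{i}\right) \geq \min_{i} \nu_{2}(x_{i})$ applied to the decomposition $v_{n,m} = \sum_{i=0}^{2^{m}+1} c_{n,m,i}\,\alpha_{3\cdot 2^{m}-3i}\beta_{2i}$. It therefore suffices to show that every summand $c_{n,m,i}\,\alpha_{3\cdot 2^{m}-3i}\beta_{2i}$ has $2$-adic valuation at least $-6n$. The two ingredients are already in hand: Lemma \ref{Lemma: alphabetaval}, which upon setting $r = 3\cdot 2^{m}-3i$ and $s = 2i$ gives
\[
    \nu_{2}\!\left(\alpha_{3\cdot 2^{m}-3i}\beta_{2i}\right) = -4(3\cdot 2^{m}-3i) - 3(2i) = -12\cdot 2^{m}+6i,
\]
and Lemma \ref{Lemma: hmcoeffsvals}, which controls $\nu_{2}(c_{n,m,i})$ on the two index regimes $0 \leq i \leq n$ and $i \geq n+1$.

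I would then split the estimate along exactly these two regimes. For $0 \leq i \leq n$, adding the bound $\nu_{2}(c_{n,m,i}) \geq 12\cdot 2^{m}-6n-6i$ to the valuation of $\alpha_{3\cdot 2^{m}-3i}\beta_{2i}$ produces $(12\cdot 2^{m}-6n-6i)+(-12\cdot 2^{m}+6i) = -6n$, so these summands sit exactly on the boundary. For $n+1 \leq i \leq 2^{m}$, the bound $\nu_{2}(c_{n,m,i}) \geq 12\cdot 2^{m}-8n-4i$ gives a summand valuation of at least $-8n+2i$, and since $i \geq n+1$ this is at least $-6n+2 > -6n$. Minimizing over $i$ then yields $\nu_{2}(v_{n,m}) \geq -6n$, which is the claim.

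The main obstacle, such as it is, lies in the bookkeeping at the ends of the index range, where Lemma \ref{Lemma: alphabetaval} (stated for $r,s \geq 2$) does not literally apply. At the top index $i = 2^{m}+1$ the exponent $3\cdot 2^{m}-3i = -3$ is negative; however the binomial factor $\binom{n+j}{i}$ in (\ref{bigpsicoeff}) forces $j \geq i-n > 2^{m}-n$, which lies beyond the summation range in (\ref{hmcoeffs}), so $c_{n,m,2^{m}+1} = 0$ and this term simply drops out. At $i = 2^{m}$ one has $\alpha_{0} = \textbf{1}$, and at $i = 0$ one has $\beta_{0} = \textbf{1}$; in both degenerate cases the identity $\nu_{2}(\alpha_{r}\beta_{s}) = -4r-3s$ still holds, as one checks directly from Lemma \ref{Lemma: alphaformula} by observing that the constant term achieves the minimal valuation (as already noted in the discussion following that lemma). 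With these endpoints accounted for, the case analysis above is complete, and I expect no further difficulty, since all the substantive valuation estimates reside in Lemmas \ref{Lemma: hmcoeffsvals} and \ref{Lemma: alphabetaval}.
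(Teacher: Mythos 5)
Your proof is correct and follows essentially the same route as the paper's: decompose $v_{n,m}$ into the summands $c_{n,m,i}\,\alpha_{3\cdot 2^{m}-3i}\beta_{2i}$, apply Lemmas \ref{Lemma: hmcoeffsvals} and \ref{Lemma: alphabetaval} on the two regimes $0 \leq i \leq n$ (giving exactly $-6n$) and $i \geq n+1$ (giving $-8n+2i \geq -6n+2$), and take the minimum. Your additional bookkeeping at the boundary indices $i = 0$, $i = 2^{m}$, and $i = 2^{m}+1$ (where $c_{n,m,2^{m}+1}=0$ and where Lemma \ref{Lemma: alphabetaval} as stated for $r,s \geq 2$ does not literally apply) is a point the paper passes over silently, and is a welcome refinement rather than a departure.
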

\begin{proof}
    We use Lemmas \ref{Lemma: hmcoeffsvals} and \ref{Lemma: alphabetaval}. When $0 \leq i \leq n$,
    \begin{align*}
        \nu_{2}\left( c_{n,m,i} \alpha_{3 \cdot 2^{m}-3i}\beta_{2i} \right) \geq -6n
    \end{align*}
    and when $n+1 \leq i \leq 2^{m}+1$,
    \begin{align*}
        \nu_{2}\left( c_{n,m,i} \alpha_{3 \cdot 2^{m}-3i}\beta_{2i} \right) \geq -8n+2i. 
    \end{align*}
    Noting that $-8n+2i$ attains a minimum value at $-6n+2$ when $i=n+1$, it is easy to see that
    \begin{align*}
        \nu_{2}\left( v_{n,m} \right) \geq \min_{0 \leq i \leq 2^{m}} \left\lbrace \nu_{2} \left( c_{n,m,i}\alpha_{3 \cdot 2^{m}-3i}\beta_{2i} \right) \right\rbrace = -6n.
    \end{align*}
\end{proof}

Hence the statement of Theorem \ref{Th: TheoremImage} holds immediately from Proposition \ref{Prop: overconvergent} and the discussion at the end of Subsection \ref{Subsection: padicforms}.

\bibliographystyle{amsalpha}
\bibliography{references}

\end{document}